\renewcommand{\a}{\mathfrak a}
\renewcommand{\b}{\mathfrak b}
\newcommand{\p}{\mathfrak p}
\newcommand{\Z}{\mathbb Z}
\newcommand{\Q}{\mathbb Q}
\newcommand{\R}{\mathbb R}
\newcommand{\C}{\mathbb C}
\newcommand{\B}{\mathcal B}
\renewcommand{\to}{\rightarrow}
\renewcommand{\o}{\mathcal O}
\renewcommand{\epsilon}{\varepsilon}
\renewcommand{\l}{\ell}
\newcommand{\Gal}{\operatorname{Gal}}
\newtheorem{thm}{Theorem}[section]
\newtheorem{lem}[thm]{Lemma}
\newtheorem{prop}[thm]{Proposition}
\newtheorem{cor}[thm]{Corollary}
\theoremstyle{definition}
\newtheorem{alg}{Algorithm}
\theoremstyle{remark}
\newtheorem*{rem}{Remark}
\newtheorem*{rems}{Remarks}
\numberwithin{equation}{section}
\title{Computing Algebraic Numbers of Bounded Height}
\author{John R. Doyle}
\address{Department of Mathematics \\
University of Georgia \\
Athens, GA  30602} 
\email{jdoyle@math.uga.edu, dkrumm@math.uga.edu}
\author{David Krumm}
\begin{document}
\begin{abstract}We describe an algorithm which, given a number field $K$ and a bound $B$, finds all the elements of $K$ having relative height at most $B$. Two lists of numbers are computed: one consisting of elements $x\in K$ for which it is known with certainty that $H_K(x)\leq B$, and one containing elements $x$ such that $|H_K(x)-B|<\theta$ for a tolerance $\theta$ chosen by the user. We show that every element of $K$ whose height is at most $B$ must appear in one of the two lists.
\end{abstract}
 
\maketitle

\section{Introduction} Let $K$ be a number field with ring of integers $\o_K$, and let $H_K$ be the relative height function on $K$. For any bound $B$ it is known that the set of all elements $x\in K$ with $H_K(x)\leq B$ is finite \cite[\S3.1]{silverman}. Moreover, there is an asymptotic formula for the number of such elements, due to Schanuel \cite{schanuel}: \[\#\{x\in K:H_K(x)\leq B\}\sim C_KB^2,\] where $C_K$ is an explicit constant which depends only on $K$. However, there does not appear to be in the literature an algorithm that would allow fast computation of all these elements. In \cite{petho/schmitt} Peth\H{o} and Schmitt require such an algorithm to be able to compute the Mordell-Weil groups of certain elliptic curves over real quadratic fields. They obtain an algorithm by showing that if $\omega_1,\ldots, \omega_n$ is an LLL-reduced integral basis of $K$, then every element $x\in K$ with $H_K(x)\leq B$ can be written as \[x=\frac{a_1\omega_1+\cdots+a_n\omega_n}{c},\] where $a_1,\ldots, a_n,c$ are integers within certain explicit bounds depending only on $B$ and $K$. The set of all such numbers $x$ is finite, so one would only need to search through this set and discard elements whose height is greater than $B$. Unfortunately, in practice this method is slow because the search space is very large. We describe in this paper  an algorithm which is faster, assuming class group representatives for $\o_K$ and a basis for the unit group of $\o_K$ can be computed efficiently. Sample computations showing the greater speed of our method may be seen in \S\ref{comp_perf}.

Our motivation for designing a fast algorithm that can handle relatively large bounds $B$ comes from arithmetic dynamics. In \cite{poonen} Poonen provides a conjecturally complete list of rational preperiodic graph structures for quadratic polynomial maps defined over $\Q$. It is then natural to ask which preperiodic graph structures can occur for such maps over other number fields. In order to gather data about these graphs one needs to be able to compute all the preperiodic points of a given quadratic polynomial. It is possible to give an explicit  upper bound for the height of any preperiodic point of a given map, so a first step towards computing preperiodic points is computing all points of bounded height. Further details on this question, together with the generated data, will be presented in a subsequent paper \cite{jxd1}.

{\bf Acknowledgements.} We thank Xander Faber, Dino Lorenzini, and Andrew Sutherland for their comments on earlier versions of this paper; Pete Clark for help finding references; Robert Rumely for suggestions on improving the efficiency of the algorithm; and the referee for several helpful comments.
  
 \section{Background and notation}
 
Let $K$ be a number field; let $\sigma_1,\ldots, \sigma_{r_1}$ be the real embeddings of $K$, and $\tau_1,\overline\tau_1,\ldots, \tau_{r_2},\overline\tau_{r_2}$ the complex embeddings. Corresponding to each of these embeddings there is an archimedean absolute value on $K$ extending the usual absolute value on $\Q$. For an embedding $\sigma$, the corresponding absolute value $|\;|_{\sigma}$ is given by $|x|_{\sigma}=|\sigma(x)|_{\C}$, where $|\cdot |_{\C}$ is the usual complex absolute value. Note that $|\;|_{\overline\tau_i}=|\;|_{\tau_i}$ for every $i$. We will denote by $M_K^{\infty}$ the set of absolute values corresponding to $\sigma_1,\ldots,\sigma_{r_1},\tau_1,\ldots, \tau_{r_2}$.

For every maximal ideal $\p$ of the ring of integers $\o_K$ there is a discrete valuation $v_{\p}$ on $K$ with the property that for every $a\in K^{\ast}$, $v_{\p}(a)$ is the power of $\p$ dividing the principal ideal $(a)$. If $\p$ lies over the prime $p$ of $\Z$, there is an absolute value $|\;|_{\p}$ on $K$ extending the $p$-adic absolute value on $\Q$. Let $e(\p)$ and $f(\p)$ denote the ramification index and residual degree of $\p$, respectively. This absolute value is then given by $|x|_{\p}=(N\p)^{-v_{\p}(x)/(e(\p)f(\p))}$. We denote by $M_K^0$ the set of all absolute values $|\;|_{\p}$, and we let $M_K=M_K^{\infty}\cup M_K^0$.

For an absolute value $v\in M_K$, let $K_v$ be the completion of $K$ with respect to $v$, and let $\Q_v$ be the completion of $\Q$ with respect to the restriction of $v$ to $\Q$. Note that $\Q_v=\R$ if $v\in M_K^{\infty}$, and $\Q_v=\Q_p$ if $v\in M_K^0$ corresponds to a maximal ideal $\p$ lying over $p$. The {\it local degree} of $K$ at $v$ is given by $n_v=[K_v:\Q_v]$. If $v$ corresponds to a real embedding of $K$, then $K_v=\Q_v=\R$, so $n_v=1$. If $v$ corresponds to a complex embedding of $K$, then $K_v=\C$ and $\Q_v=\R$, so $n_v=2$. Finally, if $v$ corresponds to a maximal ideal $\p$, then $n_v=e(\p)f(\p)$.

The {\it relative height} function $H_K:K\to\R_{\geq 1}$ is defined by \[H_K(\gamma)=\prod_{v\in M_K}\max\{|\gamma|_v^{n_v},1\}\] and has the following properties:

\begin{itemize}[itemsep = 1.1mm]
\item For any $\alpha,\beta \in K$ with $\beta\neq 0$, $H_K(\alpha/\beta)=\prod_{v\in M_K}\max\{|\alpha|_v^{n_v},|\beta|_v^{n_v}\}.$
\item For any $\alpha,\beta\in K$, $H_K(\alpha\beta)\leq H_K(\alpha)H_K(\beta).$ 
\item For any $\alpha,\beta\in\o_K$ with $\beta\neq 0$, $H_K(\alpha/\beta)=N(\alpha,\beta)^{-1}\prod_{v\in M_K^{\infty}}\max\{|\alpha|_v^{n_v},|\beta|_v^{n_v}\}.$ \\Here $N(\alpha,\beta)$ denotes the norm of the ideal generated by $\alpha$ and $\beta$.
\item For any $\gamma\in K^{\ast}$, $H_K(\gamma)=H_K(1/\gamma)$.
\item For any $\gamma\in K$ and any root of unity $\zeta\in K$, $H_K(\zeta\gamma)=H_K(\gamma)$.
\end{itemize}

It will sometimes be convenient to use the logarithmic height function $h_K=\log\circ H_K$.

The following notation will be used throughout: $\o_K^{\times}$ is the unit group of $\o_K$, $\mu_K$ is the group of roots of unity in $K$, $r=r_1+r_2-1$ is the rank of $\o_K^{\times}$, $h$ is the class number of $K$, and $\Delta_K$ is the discriminant of $K$. For an ideal $I$ of $\o_K$ we let $N(I):=\#(\o_K/I)$ denote the norm of the ideal.

Define a logarithmic map $\Lambda: K^{\ast}\to\R^{r+1}$ by \[\Lambda(x)=(\log |x|_v^{n_v})_{v\in M_K^{\infty}}=\left(\log |x|_{\sigma_1},\ldots,\log |x|_{\sigma_{r_1}},\log |x|_{\tau_1}^2,\ldots,\log |x|_{\tau_{r_2}}^2\right).\]  Note that $\Lambda$ is a group homomorphism. By a classical result of Kronecker, the kernel of $\Lambda$ is $\mu_K$. Letting $\pi:\R^{r+1}\to \R^r$ be the projection map that deletes the last coordinate, we set $\Lambda'=\pi\circ\Lambda$.

Recall that there is a system $\boldsymbol\epsilon=\{\epsilon_1,\ldots,\epsilon_r\}\subset\o_K^{\times}$ of {\it fundamental units} such that every unit $u\in\o_K^{\times}$ can be written uniquely as $u=\zeta\epsilon_1^{n_1}\cdots\epsilon_r^{n_r}$ for some integers $n_1,\ldots,n_r$ and some $\zeta\in \mu_K$. We denote by $S(\boldsymbol\epsilon)$ the $r\times r$ matrix with column vectors $\Lambda'(\epsilon_j)$.
 
\section{The method}

\noindent Let $K$ be a number field, and let $H_K:K\to\R_{\geq 1}$ be the relative height function on $K$. Given a bound $B\geq 1$, we want to list the elements $\gamma\in K$ satisfying $H_K(\gamma)\leq B$. Our method for finding all such numbers is based on the observation that, using the ideal class group and unit group of $\o_K$, this problem can be reduced to the question of finding all units of bounded height. In essence, the idea is to generalize the following statement that holds over $\Q$: if $x\in\Q^{\ast}$ and $H_{\Q}(x)\leq B$, then $x$ can be written as $x=\pm a/b$ where $a$ and $b$ are integers such that $(a,b)=1$ and $|a|,|b|\leq B$. For a general number field $K$, the analogous statement we make is that given $x\in K^{\ast}$ with $H_K(x)\leq B$, it is possible to write $x$ in the form $x=u\cdot a/b$, where $u\in\o_K^{\times}$ is a unit whose height is explicitly bounded; $a$ and $b$ are elements of $\o_K$ such that $(a,b)=\a$, where $\a$ is one ideal from a predetermined list of ideal class representatives for $\o_K$; and $|N_{K/\Q}(a)|, |N_{K/\Q}(b)|\leq B\cdot N(\a)$.

\subsection{The main algorithm}\label{main_thm_section} Theorem \ref{main_thm} below provides the theoretical basis for our algorithm. In order to describe all elements of bounded height in $K$ we fix integral ideals $\a_1,\ldots,\a_h$ forming a complete set of ideal class representatives for $\o_K$, and we fix fundamental units $\epsilon_1,\ldots,\epsilon_r$. Suppose we are given a bound $B\geq 1$. For each ideal $\a_{\l}$, let  $g_{\l,1},\ldots, g_{\l,s_{\l}}$ be generators for all the nonzero principal ideals contained in $\a_{\l}$ whose norms are at most $B\cdot N(\a_{\l})$. We define a $B$-{\it packet} to be a tuple of the form \[P=\left(\l,(i,j),(n_1,\ldots, n_r)\right)\] satisfying the following conditions: 
\begin{itemize}
\item $1\leq\l\leq h$; 
\item $1\leq i<j\leq s_{\l}$;
\item $(g_{\l,i},g_{\l,j})=\a_{\l}$; and
\item $H_K(\epsilon_1^{n_1}\cdots \epsilon_r^{n_r})\leq B\cdot H_K(g_{\l,i}/g_{\l,j})$. 
\end{itemize}

To a packet $P$ we associate the number \[c(P)=\epsilon_1^{n_1}\cdots \epsilon_r^{n_r}\cdot\frac{g_{\l,i}}{g_{\l,j}}\in K^{\ast}\backslash\o_K^{\times}\] and the set \[F(P)=\{\zeta \cdot c(P) :\zeta\in\mu_K\}\cup\{\zeta/c(P) :\zeta\in\mu_K\}.\] 

Note that the union defining $F(P)$ is disjoint, and that $F(P)$ does not contain units. Moreover, all the elements of $F(P)$ have the same height. If $r=0$, then a packet is a tuple of the form $(\l,(i,j))$ satisfying only the first three defining conditions above, and in this case $c(P)=g_{\l,i}/g_{\l,j}$.

With the notation and terminology introduced above we can now describe all elements of $K$ whose height is at most $B$.

\begin{thm}\label{main_thm} Suppose that $\gamma\in K^{\ast}$ satisfies $H_K(\gamma)\leq B$. Then either $\gamma\in\o_K^{\times}$ or $\gamma$ belongs to the disjoint union \[\bigcup_{B\text{-packets\;} P} F(P).\]
\end{thm}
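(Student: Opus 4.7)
The plan is to build the packet $P$ directly from $\gamma$ using the ideal-theoretic description of the height given in the third bulleted property of $H_K$.

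First I would reduce to the case $\gamma=\alpha/\beta$ with $\alpha,\beta\in\o_K$, $\beta\neq0$, and $(\alpha,\beta)$ equal to one of the chosen class representatives $\a_\l$. Writing $\gamma=a/b$ in any way with $a,b\in\o_K$, the ideal $(a,b)$ is equivalent to a unique $\a_\l$, say $(a,b)=\a_\l\cdot(c)$ with $c\in K^{\ast}$. Then $a/c$ and $b/c$ lie in $\a_\l\subseteq\o_K$, and replacing $(a,b)$ by $(a/c,b/c)$ makes $(\alpha,\beta)=\a_\l$ while keeping the ratio equal to $\gamma$.

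Next I would bound $|N_{K/\Q}(\alpha)|$ and $|N_{K/\Q}(\beta)|$. From the formula
\[H_K(\gamma)=N(\alpha,\beta)^{-1}\prod_{v\in M_K^{\infty}}\max\{|\alpha|_v^{n_v},|\beta|_v^{n_v}\}\]
and the product formula $\prod_{v\in M_K}|\alpha|_v^{n_v}=1$ (which gives $\prod_{v\in M_K^{\infty}}|\alpha|_v^{n_v}=|N_{K/\Q}(\alpha)|$, and likewise for $\beta$), one gets $|N_{K/\Q}(\alpha)|\leq N(\a_\l)\cdot H_K(\gamma)\leq B\cdot N(\a_\l)$, and the analogous bound for $\beta$. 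Since $(\alpha)\subseteq\a_\l$ and $(\beta)\subseteq\a_\l$, the principal ideals $(\alpha)$ and $(\beta)$ are therefore among the finitely many principal ideals $\a_\l$ contains of norm at most $B\cdot N(\a_\l)$, so there exist indices $i',j'$ and units $u_1,u_2\in\o_K^\times$ with $\alpha=u_1 g_{\l,i'}$ and $\beta=u_2 g_{\l,j'}$. Necessarily $(g_{\l,i'},g_{\l,j'})=\a_\l$, and $i'\neq j'$ since otherwise $\gamma=u_1/u_2$ would be a unit, contrary to hypothesis.

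Setting $(i,j)=(i',j')$ if $i'<j'$ and $(i,j)=(j',i')$ otherwise, I would then expand the ratio $u_1/u_2$ (in the first case) or $u_2/u_1$ (in the second) as $\zeta\,\epsilon_1^{n_1}\cdots\epsilon_r^{n_r}$ with $\zeta\in\mu_K$, producing a candidate tuple $P=(\l,(i,j),(n_1,\ldots,n_r))$. The only non-trivial defining condition still to verify is the height bound on $\epsilon_1^{n_1}\cdots\epsilon_r^{n_r}$; this is the one point needing a short calculation and is the main obstacle. It follows from the multiplicativity-up-to-product-inequality of $H_K$ together with $H_K(x)=H_K(1/x)$: writing $\delta=\epsilon_1^{n_1}\cdots\epsilon_r^{n_r}\cdot g_{\l,i}/g_{\l,j}$, we have $H_K(\delta)=H_K(\gamma)\leq B$ (roots of unity and, in the swapped case, inversion preserve height), and therefore
\[H_K(\epsilon_1^{n_1}\cdots\epsilon_r^{n_r})=H_K\!\left(\delta\cdot\frac{g_{\l,j}}{g_{\l,i}}\right)\leq H_K(\delta)\cdot H_K\!\left(\frac{g_{\l,i}}{g_{\l,j}}\right)\leq B\cdot H_K\!\left(\frac{g_{\l,i}}{g_{\l,j}}\right),\]
so $P$ is genuinely a $B$-packet. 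By construction $\gamma=\zeta\,c(P)$ in the unswapped case and $\gamma=\zeta/c(P)$ in the swapped case, so $\gamma\in F(P)$, completing the proof.
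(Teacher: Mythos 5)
Your construction of the packet $P$ from a nonunit $\gamma$ with $H_K(\gamma)\leq B$ is correct and closely parallels the paper's argument. Where the paper begins by writing the fractional ideal $(\gamma)=IJ^{-1}$ with $I,J$ coprime integral ideals and then recovers $\alpha,\beta$ with $(\alpha,\beta)=\a_\l$, you start from an arbitrary $\gamma=a/b$ with $a,b\in\o_K$ and rescale by the generator $c$ of $\a_\l^{-1}(a,b)$; both routes produce the same normalization, and your norm bound on $\alpha,\beta$, the matching of $(\alpha),(\beta)$ against the $g_{\l,\bullet}$, the extraction of the unit $u_1/u_2$ (or its inverse) as $\zeta\epsilon_1^{n_1}\cdots\epsilon_r^{n_r}$, and the verification of the height bound on $\epsilon_1^{n_1}\cdots\epsilon_r^{n_r}$ are all correct.

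However, the theorem asserts more than membership: the union $\bigcup_P F(P)$ over all $B$-packets is claimed to be \emph{disjoint}, and your proof does not address this at all. This is not a cosmetic omission. The paper devotes the second half of its proof to showing that $F(P)\cap F(P')\neq\emptyset$ forces $P=P'$, arguing from the ideal factorizations $(g_{\l,i})=\a_\l\b_{\l,i}$ and their coprimality that the case $c(P)/c(P')\in\mu_K$ yields $\l=\l'$, $(i,j)=(i',j')$, and $(n_1,\ldots,n_r)=(n_1',\ldots,n_r')$, while the case $c(P)\cdot c(P')\in\mu_K$ yields $(i,j)=(j',i')$ and hence a contradiction with $i<j$, $i'<j'$. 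Disjointness is exactly what guarantees, as the paper notes after Algorithm~\ref{main1}, that the output list contains no duplicates, so it is substantive content of the theorem; you would need to supply this argument to have a complete proof.
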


\begin{proof} Assuming that $\gamma\notin\o_K^{\times}$ we must show that there is a packet $P$ such that $\gamma\in F(P)$. We can write the fractional ideal generated by $\gamma$ as $(\gamma)=IJ^{-1}$, where $I$ and $J$ are coprime integral ideals. Since $I$ and $J$ are in the same ideal class, there is some ideal $\a_{\l}$ (namely the one representing the inverse class of $I$ and $J$) such that $\a_{\l}I$ and $\a_{\l}J$ are principal; say $(\alpha)=\a_{\l}I,(\beta)=\a_{\l}J$. Note that $(\alpha,\beta)=\a_{\l}$ because $I$ and $J$ are coprime. Since $(\gamma)=(\alpha)(\beta)^{-1}$ we may assume, after scaling $\alpha$ by a unit, that $\gamma=\alpha/\beta$. From the bound $H_K(\gamma)\leq B$ it follows that \[\prod_{v\in M_K^{\infty}}\max\{|\alpha|_v^{n_v},|\beta|_v^{n_v}\}\leq B\cdot N(\a_{\l}).\] In particular, \[|N_{K/\Q}(\alpha)|=\prod_{v\in M_K^{\infty}}|\alpha|_v^{n_v}  \leq B\cdot N(\a_{\l})\hspace{3mm}\mathrm{and}\hspace{3mm}|N_{K/\Q}(\beta)|=\prod_{v\in M_K^{\infty}}|\beta|_v^{n_v}  \leq B\cdot N(\a_{\l}).\]  Since $N(\alpha), N(\beta)\leq B\cdot N(\a_{\l})$, there must be some indices $a,b\leq s_{\l}$ such that $(\alpha)=(g_{\l,a})$ and $(\beta)=(g_{\l,b})$. Hence, we have $\alpha=g_{\l,a}u_a$ and $\beta=g_{\l,b}u_b$ for some units $u_{a},u_{b}$. Letting $t=u_a/u_b$ we have $\gamma=tg_{\l,a}/g_{\l,b}$, and since $H_K(\gamma)\leq B$, then \[H_K(t)=H_K(\gamma g_{\l,b}/g_{\l,a})\leq H_K(\gamma)H_K(g_{\l,b}/g_{\l,a})\leq B\cdot H_K(g_{\l,b}/g_{\l,a}).\] 

Write $t=\zeta\epsilon_1^{m_1}\cdots\epsilon_r^{m_r}$ for some integers $m_1,\ldots,m_r$ and some $\zeta\in \mu_K$. We define indices $i,j$ and an integer tuple $(n_1,\ldots, n_r)$ as follows: if $a< b$, we let $i=a, j=b, (n_1,\ldots, n_r)=(m_1,\ldots, m_r)$; and if $a>b$, we let $i=b, j=a,  (n_1,\ldots, n_r)=(-m_1,\ldots, -m_r)$. (The case $a=b$ cannot occur since $\gamma$ is not a unit.) Note that in either case we have $i< j$ and $(g_{\l,i},g_{\l,j})=(\alpha,\beta)=\a_{\l}$. Letting $u= \epsilon_1^{n_1}\cdots\epsilon_r^{n_r}$ we have $H_K(u)=H_K(t)$, so $H_K(u)\leq B\cdot H_K(g_{\l,i}/g_{\l,j})$. This proves that $P:=\left(\l,(i,j),(n_1,\ldots, n_r)\right)$ is a $B$-packet. Finally, if we set $c=ug_{\l,i}/g_{\l,j}$, then $\zeta c=\gamma$ if $a< b$; and $\zeta/c=\gamma$ if $a>b$.  Therefore, $\gamma\in F(P)$.

We show now that the union in the statement of the theorem is disjoint. Suppose that \[P=\left(\l,(i,j),(n_1,\ldots, n_r)\right)\; \mathrm{\;and\;} \;P'=\left(\l',(i',j'),(n_1',\ldots, n_r')\right)\] are packets such that $F(P)\cap F(P')\neq\emptyset$. We aim to show that $P=P'$. Let $u=\epsilon_1^{n_1}\cdots \epsilon_r^{n_r}$, and similarly define $u'$. From the assumption that $F(P)$ and $F(P')$ have a common element it follows that either \[c(P)\cdot c(P')\in\mu_K \;\mathrm{\;or\;} \;c(P)/c(P')\in\mu_K.\] We consider the latter case first. There are ideals $\b_{\l,i}, \b_{\l,j}, \b_{\l',i'}, \b_{\l',j'}$ such that
\begin{equation}\label{g_ideals}(g_{\l,i})=\a_{\l}\b_{\l,i}\;;\; (g_{\l,j})=\a_{\l}\b_{\l,j}\;;\;\  (g_{\l',i'})=\a_{\l'}\b_{\l',i'}\;;\;  (g_{\l',j'})=\a_{\l'}\b_{\l',j'}\;.
\end{equation} 
Note that $\b_{\l,i}$ and $\b_{\l,j}$ are coprime because $(g_{\l,i},g_{\l,j})=\a_{\l}$; similarly, $\b_{\l',i'}$ and $\b_{\l',j'}$ are coprime. Now, since $c(P)/c(P')\in\mu_K$, there is an equality of ideals $(g_{\l,i})(g_{\l',j'})=(g_{\l,j})(g_{\l',i'})$. Therefore, $\b_{\l,i}\b_{\l',j'}=\b_{\l,j}\b_{\l',i'}$ and by coprimality we conclude that 
\begin{equation}\label{b_ideals}
\b_{\l,i}=\b_{\l',i'}\; \mathrm{\;and\;} \;\b_{\l,j}=\b_{\l',j'}.
\end{equation}
 Considering ideal classes, by \eqref{g_ideals} and \eqref{b_ideals} we obtain \[[\a_{\l}]^{-1}=[\b_{\l,i}]=[\b_{\l',i'}]=[\a_{\l'}]^{-1},\] so $\l=\l'$. Thus, again using \eqref{g_ideals} and \eqref{b_ideals}, \[(g_{\l,i})=\a_{\l}\b_{\l,i}=\a_{\l'}\b_{\l',i'}=(g_{\l',i'})=(g_{\l,i'}),\] and hence $i=i'$; similarly, $j=j'$. It follows that $u/u'=c(P)/c(P')\in\mu_K$, so $(n_1,\ldots, n_r)=(n_1',\ldots, n_r')$, and therefore $P=P'$.

The case where $c(P)\cdot c(P')\in\mu_K$ is dealt with similarly, and leads to the conclusion that $(i,j)=(j',i')$. But this is a contradiction, since $i<j$ and $i'<j'$; therefore, this case cannot occur.
\end{proof}

\begin{rem} In the case where $r=0$, Theorem \ref{main_thm} and its proof still hold if we omit mention of the fundamental units. See \textsection\ref{bdd_height_quad} for a refinement of the theorem in this case.
\end{rem}

From Theorem \ref{main_thm} we deduce the following algorithm.

\begin{alg}[Algebraic numbers of bounded height]\label{main1} \mbox{}\\
Input: A number field $K$ and a bound $B\geq 1$.\\
Output: A list of all elements $x\in K$ satisfying $H_K(x)\leq B$.
\begin{enumerate}[itemsep = 1.1mm]
\item Create a list $L$ containing only the element 0.
\item Determine a complete set $\{\a_1,\ldots,\a_h\}$ of ideal class representatives for $\o_K$.
\item Compute a system $\boldsymbol\epsilon=\{\epsilon_1,\ldots,\epsilon_r\}$ of fundamental units.
\item Include in $L$ all units $u\in\o_K^{\times}$ with $H_K(u)\leq B$.
\item For each ideal $\a_{\l}$\;:
\begin{enumerate}
\item Find generators $g_{\l,1},\ldots, g_{\l,s_{\l}}$ for all the nonzero principal ideals contained in $\a_{\l}$ whose norms are at most $B\cdot N(\a_{\l})$.
\item For each pair of indices $i,j$ such that $1\leq i< j\leq s_{\l}$ and $(g_{\l,i},g_{\l,j})=\a_{\l}$\;: 
\begin{enumerate}
\item Find all units $u$ of the form $u=\epsilon_1^{n_1}\cdots\epsilon_r^{n_r}$ such that $H_K(u)\leq B\cdot H_K(g_{\l,i}/g_{\l,j})$.
\item For all such units $u$, let $c=u\cdot g_{\l,i}/g_{\l,j}$. If $H_K(c)\leq B$, then append to $L$ all elements of the form $\zeta\cdot c$ and $\zeta/c$ with $\zeta\in\mu_K$.
\end{enumerate}
\end{enumerate}
\item Return the list $L$.
\end{enumerate} 
\end{alg}

Note that, by Theorem \ref{main_thm}, the list $L$ will not contain duplicate elements. There are known methods for carrying out steps (2) and (3) of Algorithm \ref{main1} --- see, for instance, \cite[\S6.5]{cohen}. To do step 5(a) one can use methods for finding elements in $\o_K$ of given norm; one algorithm for doing this can be found in \cite{fincke/pohst}. It remains to explain how a set of units of bounded height can be computed.

\subsection{Units of bounded height}\label{units} For a given bound $D\geq 1$ we wish to determine all units $u\in\o_K^{\times}$ such that $H_K(u)\leq D$. Our method for doing this makes use of the following classical result.

\begin{thm}[Dirichlet] The map $\Lambda':\o_K^{\times}\to\R^r$ is a group homomorphism with kernel $\mu_K$,  and $\Lambda'(\o_K^{\times})$ is a lattice of full rank in $\R^r$ spanned by the vectors $\Lambda'(\epsilon_1),\ldots,\Lambda'(\epsilon_r)$.
\end{thm}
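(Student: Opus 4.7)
The plan is to verify the three assertions one at a time, using progressively heavier tools. First, the homomorphism property is automatic: $\Lambda$ itself is a group homomorphism since $\log|xy|_v^{n_v}=\log|x|_v^{n_v}+\log|y|_v^{n_v}$ for every archimedean $v$, and $\Lambda'=\pi\circ\Lambda$ inherits this through the linear projection $\pi$.

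Next I would compute the kernel of $\Lambda'$ on $\o_K^{\times}$. For a unit $u$, the ideal $(u)$ is the whole ring, so $|u|_{\p}=1$ for every $\p\in M_K^0$. Hence the product formula $\prod_{v\in M_K}|u|_v^{n_v}=1$ collapses to $\prod_{v\in M_K^{\infty}}|u|_v^{n_v}=1$, which says precisely that the coordinates of $\Lambda(u)$ sum to zero. Consequently $\Lambda(u)$ is determined by its first $r$ coordinates: $\Lambda'(u)=0$ if and only if $\Lambda(u)=0$. The Kronecker result already invoked in the excerpt then identifies this kernel with $\mu_K$.

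For the lattice property, I would argue discreteness via a standard finiteness principle. Pick any bounded region $\Omega\subset\R^r$; if $\Lambda'(u)\in\Omega$, then together with $\prod|u|_v^{n_v}=1$ all archimedean absolute values of $u$ (and of all its conjugates) are bounded by some constant $C$. The elementary symmetric functions of the conjugates of $u$ are then bounded, so the coefficients of the minimal polynomial of $u$ over $\Z$ lie in a finite set, giving finitely many possible $u$. Thus $\Lambda'(\o_K^{\times})$ meets every bounded region in finitely many points, so it is a discrete subgroup of $\R^r$ and hence a lattice of some rank $\leq r$.

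The hard step, and the one where I would expect to spend the real effort, is proving the lattice has full rank $r$. The cleanest route is Minkowski's convex body theorem: for each archimedean place $v_0$ I would construct, by choosing a symmetric convex body of volume exceeding $2^{r_1+2r_2}$ times the covolume of $\o_K$ inside $K\otimes\R$, a nonzero $\alpha\in\o_K$ whose norm is bounded by a fixed constant $M$ (depending only on $K$) and for which $|\alpha|_{v_0}$ is much smaller than $|\alpha|_v$ at the other infinite places. Iterating with shrinking weights at $v_0$ yields an infinite sequence of such $\alpha$'s; since the ideals $(\alpha)$ have bounded norm there are only finitely many of them, so two sequence members generate the same ideal and their ratio is a unit $\eta_{v_0}$ satisfying $|\eta_{v_0}|_{v_0}<1$ and $|\eta_{v_0}|_{v}>1$ for $v\neq v_0$ in $M_K^{\infty}$. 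Doing this for $v_0$ ranging over any $r$ of the $r+1$ archimedean places produces units $\eta_1,\dots,\eta_r$ whose images $\Lambda'(\eta_j)$ form a matrix whose off-diagonal entries are positive and whose diagonal entries are negative, a pattern easily checked to be nonsingular (for instance by a strict-diagonal-dominance or sign argument). This shows $\Lambda'(\o_K^{\times})$ spans $\R^r$, hence has full rank, and any basis can be renamed $\Lambda'(\epsilon_1),\dots,\Lambda'(\epsilon_r)$, giving the fundamental units named in the statement.
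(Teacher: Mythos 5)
The paper states this as Dirichlet's unit theorem and treats it as a classical result, giving no proof; there is therefore no in-paper argument for you to match, and I assess your sketch on its own terms. Your treatment of the homomorphism property, the kernel computation (product formula forces the coordinates of $\Lambda(u)$ to sum to zero, then Kronecker), and the discreteness argument via bounded conjugates are all correct and standard.

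The full-rank step, however, does not work as written, and the direction of the Minkowski iteration is the issue. You fix an archimedean place $v_0$ and build a sequence $\alpha_k\in\o_K$ of bounded norm with $|\alpha_k|_{v_0}$ shrinking; two of them generate the same ideal, and their ratio $\eta$ indeed has $|\eta|_{v_0}<1$. But nothing in this construction controls $|\eta|_v$ at the other archimedean places, so the inequalities $|\eta|_v>1$ for every $v\neq v_0$ that you assert are not forced. The standard repair is to shrink the Minkowski box at every $v\neq v_0$ simultaneously, enlarging the $v_0$-side to keep the volume above the Minkowski threshold; then the ratio $\eta=\alpha_{k'}/\alpha_k$ with $k'>k$ satisfies $|\eta|_v<1$ for all $v\neq v_0$, and $|\eta|_{v_0}>1$ follows from the product formula. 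A second, smaller gap: the sign pattern alone (negative diagonal, positive off-diagonal) does not give nonsingularity --- for $r=2$ the matrix with $-1$ on the diagonal and $1$ off the diagonal is singular. Your parenthetical appeal to strict diagonal dominance is the right move, but it needs its source spelled out: since $\Lambda(\eta_j)$ has zero coordinate sum, the $j$-th column of the $r\times r$ matrix sums to $-\log|\eta_j|_{v_{r+1}}^{n_{v_{r+1}}}$, which has the same sign as the diagonal entry $A_{jj}$, and that sign condition is exactly what forces $|A_{jj}|>\sum_{i\neq j}|A_{ij}|$ and hence invertibility.
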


Let $S=S(\boldsymbol\epsilon)$ be the $r\times r$ matrix with column vectors $\Lambda'(\epsilon_j)$, and let $T=S^{-1}$ be the linear automorphism of $\R^r$ taking the basis $\Lambda'(\epsilon_1),\ldots,\Lambda'(\epsilon_r)$ to the standard basis for $\R^r$.

\begin{prop}\label{bdd_units} Suppose $u\in\o_K^{\times}$ satisfies $H_K(u)\leq D$. Then there exist an integer point $(n_1,\ldots, n_r)$ in the polytope $T([-\log D,\log D]^r)$ and a root of unity $\zeta\in\mu_K$ such that $u=\zeta\epsilon_1^{n_1}\cdots \epsilon_r^{n_r}$.
\end{prop}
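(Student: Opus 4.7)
The plan is to invoke Dirichlet's theorem to produce the exponents $n_i$, interpret them via the inverse of $S$, and then use the height bound together with $H_K(u) = H_K(1/u)$ to confine $\Lambda'(u)$ to the cube $[-\log D,\log D]^r$.

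First, using Dirichlet's theorem (as recalled in the excerpt), write $u = \zeta\epsilon_1^{n_1}\cdots\epsilon_r^{n_r}$ for a unique tuple $(n_1,\ldots,n_r)\in\Z^r$ and $\zeta\in\mu_K$. Since $\Lambda'$ is a homomorphism that kills $\mu_K$, applying $\Lambda'$ gives $\Lambda'(u) = n_1\Lambda'(\epsilon_1)+\cdots+n_r\Lambda'(\epsilon_r) = S\,(n_1,\ldots,n_r)^\top$, so $(n_1,\ldots,n_r)^\top = T\,\Lambda'(u)$. Thus the whole content of the statement is to check that $\Lambda'(u)\in[-\log D,\log D]^r$, for then $T\Lambda'(u)\in T([-\log D,\log D]^r)$ as desired.

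For that, recall that $u$ is a unit, so $|u|_v = 1$ for every $v\in M_K^0$, and hence
\[
H_K(u) = \prod_{v\in M_K^{\infty}}\max\{|u|_v^{n_v},1\}.
\]
Since each factor is at least $1$, the bound $H_K(u)\leq D$ forces $\max\{|u|_v^{n_v},1\}\leq D$ for every $v\in M_K^{\infty}$, i.e. $\log|u|_v^{n_v}\leq \log D$. Applying the same reasoning to $1/u$, using the property $H_K(1/u)=H_K(u)\leq D$, yields $|u|_v^{n_v}\geq 1/D$, hence $\log|u|_v^{n_v}\geq -\log D$. Consequently every coordinate of $\Lambda(u)$ lies in $[-\log D,\log D]$, and in particular the same holds after projecting out the last coordinate, giving $\Lambda'(u)\in[-\log D,\log D]^r$.

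There is no real obstacle here; the argument is essentially a bookkeeping exercise combining Dirichlet's theorem with the definitions of $H_K$, $\Lambda$, and $T$. The only mild subtlety is remembering to use $H_K(u)=H_K(1/u)$ to obtain the lower bound $-\log D$ on each coordinate of $\Lambda(u)$, since the height itself only directly supplies the upper bound.
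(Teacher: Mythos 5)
Your proof is correct and uses essentially the same approach as the paper: decompose $u = \zeta\epsilon_1^{n_1}\cdots\epsilon_r^{n_r}$ via Dirichlet's theorem, note that $(n_1,\ldots,n_r) = T\Lambda'(u)$, and confine $\Lambda'(u)$ to the box using $H_K(u)\le D$ for the upper bound on each coordinate and $H_K(1/u)=H_K(u)\le D$ for the lower bound.
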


\begin{proof} The bound $H_K(u)\leq D$ implies that $|u|_v^{n_v}\leq D$ for all $v\in M_K^{\infty}$. Since $H_K(1/u)=H_K(u)$ we also have $1/|u|_{v}^{n_v}\leq D$. Therefore, \[-\log D\leq \log |u|_v^{n_v}\leq\log D \hspace{3mm}\mathrm{for\;all\;} v\in M_K^{\infty},\] 

so $\Lambda'(u)\in[-\log D,\log D]^r$. We can write $u=\zeta\epsilon_1^{n_1}\cdots \epsilon_r^{n_r}$ for some $\zeta\in\mu_K$ and some integers $n_i$. Then $(n_1,\ldots, n_r)=T(\Lambda'(u))\in T([-\log D,\log D]^r)$.
\end{proof}

The above proposition leads to the following algorithm.

\begin{alg}[Units of bounded height]\label{units1}\mbox{}\\
Input: A number field $K$ and a bound $D\geq 1$.\\
Output: A list of all units $u\in \o_K^{\times}$ satisfying $H_K(u)\leq D$.
\begin{enumerate}[itemsep = 1.1mm]
\item If $r=0$, return $\mu_K$. Otherwise:
\item Create an empty list $U$.
\item Compute fundamental units $\epsilon_1,\ldots, \epsilon_r$.
\item Find all integer points $Q$ in the polytope $T([-\log D,\log D]^r)$.
\item For all such points $Q=(n_1,\ldots, n_r)$:
\begin{enumerate}
\item Let $u=\epsilon_1^{n_1}\cdots \epsilon_r^{n_r}$. 
\item If $H_K(u)\leq D$, then include $u\zeta$ in $U$ for all $\zeta\in\mu_K.$
\end{enumerate}
\item Return the list $U$.
\end{enumerate}
\end{alg}

Step (4) of Algorithm \ref{units1} can be done using known methods for finding integer points in polytopes; see the articles \cite{barvinok/pommersheim,LattE}. 

\begin{rem} With more work it is possible to replace the box $[-\log D,\log D]^r$ in step 4 of Algorithm \ref{units1} with a substantially smaller set, namely the polytope $\mathcal P(D)$ in $\R^r$ cut out by the inequalities 
\[ -\log D \le \sum_{i \in I} x_i \leq \log D, \] where $I$ runs through all nonempty subsets of $\{1,\ldots,r\}$. This polytope is contained in the box $[-\log D,\log D]^r$, and one can show that its volume is smaller than that of the box by a factor of at least $(\lfloor r/2\rfloor !)^2$. In addition to providing a smaller search space, using $\mathcal P(D)$ eliminates the need to check the heights of the units obtained. This is due to the fact that for units $u$, $H_K(u)\leq D$ if and only if $\Lambda'(u)\in\mathcal P(D)$. We omit the proofs of these statements since we will not use the polytope $\mathcal P(D)$ here; the box $[-\log D, \log D]^r$ works well in practice and will suffice for a theoretical analysis of the main algorithm. 
\end{rem}

For later reference we record the following facts concerning units of bounded height.

\begin{lem}\label{unit_tuple_bound} If the unit $u=\zeta\epsilon_1^{n_1}\cdots \epsilon_r^{n_r}$ satisfies $H_K(u)\le D$, then \[\max_{1\leq i\leq r}|n_i|\le M:=\lfloor\|T\|\cdot\sqrt r\cdot \log D\rfloor,\] where $\|T\|$ denotes the operator norm of $T$.
\end{lem}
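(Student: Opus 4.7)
The plan is to combine Proposition \ref{bdd_units} with a routine norm estimate. By Proposition \ref{bdd_units}, the hypothesis $H_K(u)\le D$ forces the integer tuple $(n_1,\ldots,n_r)$ to lie in the polytope $T([-\log D,\log D]^r)$. So it suffices to bound the maximum absolute value of a coordinate of $T(v)$ as $v$ ranges over the cube $[-\log D,\log D]^r$.

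First I would note that any $v\in [-\log D,\log D]^r$ satisfies $\|v\|_2\le \sqrt{r}\cdot\log D$, since each of its $r$ coordinates has absolute value at most $\log D$. Applying the operator norm inequality gives $\|T(v)\|_2\le \|T\|\cdot\|v\|_2\le \|T\|\cdot\sqrt{r}\cdot\log D$. Since the absolute value of any coordinate of a vector in $\R^r$ is bounded by its Euclidean norm, each coordinate of $T(v)$ is at most $\|T\|\cdot\sqrt{r}\cdot\log D$ in absolute value.

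Applying this with $v=\Lambda'(u)\in[-\log D,\log D]^r$ (as established in the proof of Proposition \ref{bdd_units}) yields $|n_i|\le \|T\|\cdot\sqrt{r}\cdot\log D$ for each $i$. Because the $n_i$ are integers, we may take the floor of the right-hand side, giving the desired bound $\max_i |n_i|\le M$. There is no real obstacle here; the argument is essentially a one-line application of the definition of operator norm together with the comparison between the $\ell^\infty$ and $\ell^2$ norms on $\R^r$.
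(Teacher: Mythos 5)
Your argument is correct and is essentially the same as the paper's: both invoke Proposition \ref{bdd_units} to place the tuple in $T([-\log D,\log D]^r)$, bound the Euclidean norm of any point in the cube by $\sqrt{r}\log D$, apply the operator norm, and then use integrality of the $n_i$ to pass to the floor. You have merely spelled out the intermediate step (comparing the $\ell^\infty$ and $\ell^2$ norms) that the paper leaves implicit.
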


\begin{proof} By Proposition \ref{bdd_units}, the point $(n_1,\ldots, n_r)$ belongs to $T([-\log D,\log D]^r)$, and therefore has Euclidean norm bounded by $\|T\|\cdot\sqrt r\cdot \log D$. Since each $n_i$ is an integer, it follows that $|n_i|\le M$ for all $i$.
\end{proof}

Using Lemma \ref{unit_tuple_bound} we can deduce upper bounds for the number of units of bounded height.

\begin{cor}\label{unit_number} Fix $\kappa>0$. There is a constant $q=q(\kappa, K,\boldsymbol\epsilon)$ such that for every bound $D\geq 1+\kappa$, the number of units $u\in\o_K^{\times}$ satisfying $H_K(u)\leq D$ is at most $q\cdot(\log D)^r$.
\end{cor}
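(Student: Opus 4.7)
The plan is to reduce the counting problem directly to Lemma \ref{unit_tuple_bound} and then convert the resulting polynomial-in-$M$ bound into the claimed $(\log D)^r$ bound, using the hypothesis $D\geq 1+\kappa$ to absorb additive constants.

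First I would observe that by Dirichlet's theorem, every unit $u\in\o_K^\times$ can be written uniquely as $u=\zeta\epsilon_1^{n_1}\cdots\epsilon_r^{n_r}$ for some $\zeta\in\mu_K$ and some $(n_1,\ldots,n_r)\in\Z^r$. Thus the number of units of height at most $D$ is at most $|\mu_K|$ times the number of admissible integer tuples. By Lemma \ref{unit_tuple_bound}, every admissible tuple lies in the box $[-M,M]^r$ where $M=\lfloor\|T\|\sqrt{r}\log D\rfloor$, and the number of integer points in this box is at most $(2M+1)^r\leq (2\|T\|\sqrt{r}\log D+1)^r$.

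Next I would use the hypothesis $D\geq 1+\kappa$ to bound the stray additive $1$ by a multiple of $\log D$: since $\log D\geq \log(1+\kappa)>0$, we have $1\leq \log D/\log(1+\kappa)$, so
\[ 2\|T\|\sqrt{r}\log D + 1 \;\leq\; \left(2\|T\|\sqrt{r}+\frac{1}{\log(1+\kappa)}\right)\log D. \]
Raising to the $r$-th power and multiplying by $|\mu_K|$ yields the desired bound with
\[ q \;=\; |\mu_K|\cdot\left(2\|T\|\sqrt{r}+\frac{1}{\log(1+\kappa)}\right)^r, \]
which depends only on $\kappa$, $K$, and the fundamental system $\boldsymbol\epsilon$ (through $\|T\|=\|S(\boldsymbol\epsilon)^{-1}\|$, $r$, and $|\mu_K|$).

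There is essentially no obstacle here; the only point requiring care is the role of $\kappa$. Without the assumption $D\geq 1+\kappa$ the estimate would fail as $D\to 1^+$, since then $\log D\to 0$ while the number of units is at least $|\mu_K|\geq 1$. The hypothesis $D\geq 1+\kappa$ is exactly what is needed to absorb the additive constants from the floor function and from $(2M+1)$ versus $2M$ into a multiplicative constant.
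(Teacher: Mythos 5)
Your proof is correct and follows essentially the same route as the paper: reduce to Lemma \ref{unit_tuple_bound}, count integer points in the box $[-M,M]^r$, multiply by $\#\mu_K$, and absorb the additive $1$ via $\log D\ge\log(1+\kappa)$ to arrive at the same constant $q=\#\mu_K\left(2\|T\|\sqrt r+\tfrac{1}{\log(1+\kappa)}\right)^r$.
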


\begin{proof} Suppose $u=\zeta\epsilon_1^{n_1}\cdots \epsilon_r^{n_r}$ is a unit with $H_K(u)\leq D$. By Lemma \ref{unit_tuple_bound}, $(n_1,\ldots, n_r)\in [-M,M]^r$. This gives at most $(2M+1)^r$ options for the tuple $(n_1,\ldots, n_r)$, and hence at most $(\#\mu_K)\cdot(2M+1)^r$ options for $u$. Therefore, \[\frac{\#\{u\in\o_K^{\times}:H_K(u)\leq D\}}{(\log D)^r}\leq(\#\mu_K)\left(2\|T\|\cdot\sqrt{r}+\frac{1}{\log (1+\kappa)}\right)^r,\] and the result follows.
\end{proof}

While Algorithms \ref{main1} and \ref{units1} form a theoretically accurate description of our method, for purposes of explicit computation they are not optimal. We discuss now a few changes to our method which will make it more efficient.

\subsection{Computational improvements to the method}\label{improvements} We aim in this section to modify Algorithms \ref{main1} and \ref{units1} with the following goals in mind: to avoid computing any given piece of data more than once; to minimize the cost of height computations; and to avoid, as much as possible, doing arithmetic with fundamental units. The latter is desirable because fundamental units in a number field can be very large, so that arithmetic operations with them might be costly.

Regarding the expense of height computations, we begin by noting that the height of an element of $K$ can be computed by using the logarithmic map $\Lambda$. Indeed, suppose $\alpha,\beta$ are nonzero elements of $\o_K$; letting $\Lambda(\alpha)=(x_1,\ldots, x_{r+1})$ and $\Lambda(\beta)=(y_1,\ldots, y_{r+1})$ we have
\begin{equation}\label{height_via_log} \log(N(\alpha,\beta))+ h_K(\alpha/\beta)=\sum_{i=1}^{r+1}\max\{x_i,y_i\}.
\end{equation}

In view of this fact, throughout this section we will use the logarithmic height function $h_K$ rather than $H_K$. From a computational standpoint, $h_K$ is also more convenient because it is defined as a sum rather than a product.

To minimize the amount of time spent on height computations in step (5) of Algorithm \ref{main1}, we make the following observation: using \eqref{height_via_log}, all of the heights required in that step can be computed from the data of the real vectors $\Lambda(\epsilon_1),\ldots, \Lambda(\epsilon_r)$ and the vectors $\Lambda(g_{\l,i})$ for all indices $\l, i$: 
\begin{itemize}[itemsep = 1.1mm] 
\item The height of a unit $u=\epsilon_1^{n_1}\cdots \epsilon_r^{n_r}$ can be found knowing only the tuple $(n_1,\ldots, n_r)$ --- without actually computing $u$. Indeed, $h_K(u)$ can be computed from the vector $\Lambda(u)$, which is equal to $\sum_{j=1}^rn_j\Lambda(\epsilon_j)$. 
\item The numbers $h_K(g_{\l,i}/g_{\l,j})$ required in step 5(b)(i) of Algorithm \ref{main1} can be computed from the vectors $\Lambda(g_{\l,i})$ and $\Lambda(g_{\l,j})$.
\item Using the fact that $\Lambda(u\cdot g_{\l,i})=\Lambda(u)+\Lambda(g_{\l,i})$, the number $h_K(u\cdot g_{\l,i}/g_{\l,j})$ in step 5(b)(ii) can be computed from the tuple $(n_1,\ldots, n_r)$ and the vectors $\Lambda(g_{\l,i})$, $\Lambda(g_{\l,j})$.
\end{itemize}

From these observations we conclude that the vectors $\Lambda(\epsilon_1),\ldots, \Lambda(\epsilon_r)$ and $\Lambda(g_{\l,i})$ (for all appropriate indices $\l,i$) should be computed once and stored for later use; all height computations that take place within the algorithm can then make use of this precomputed data. 

Step 5(b)(i) of Algorithm \ref{main1}, in which we compute all units of height less than a given bound, must be performed many times --- each time with a different height bound. It would be more efficient to let $d$ be the largest height bound considered, and determine the list $U$ of units $u$ satisfying $h_K(u)\leq d$. This list will then contain all units needed throughout the algorithm. In particular, the units from step (4) can be obtained from $U$. Hence, step (4) should be carried out only after the list $U$ has been computed. By making these changes, only one computation of units of bounded height will be required throughout the entire algorithm.

Finally, in order to speed up computations involving the vectors $\Lambda'(\epsilon_1),\ldots,\Lambda'(\epsilon_r)$,  it will be useful to assume that these vectors form a suitably reduced basis for the lattice $\Lambda'(\o_K^{\times})\subset\R^r$. We will therefore apply the LLL algorithm \cite{lll} to obtain a reduced basis.

With all of the above modifications in mind we now give an improved version of our method.

\begin{alg}[Algebraic numbers of bounded height]\label{main2}\mbox{}\\
Input: A number field $K$ and a bound $B\geq 1$.\\
Output: A list $L$ of all elements $\gamma \in K$ satisfying $H_K(\gamma) \le B$.

\smallskip
\begin{enumerate}[itemsep = 1.1mm]
\item Find a complete set $\{\a_1,\ldots,\a_h\}$ of ideal class representatives for $\o_K$ and compute an LLL-reduced basis $\boldsymbol\epsilon=\{\epsilon_1,\ldots,\epsilon_r\}$ of fundamental units in $\o_K$. Compute the numbers $\log N(\a_{\l})$ for every index $\l$, and the vectors $\Lambda(\epsilon_j)$ for every $j$.
\item Construct the set \[\mathcal N:=\bigcup_{\l=1}^h\{m\cdot N(\a_{\l}):m\le B\}\] and make a list $\mathcal{P}$ of all nonzero principal ideals of $\o_K$, each represented by a single generator $g$, having norm in $\mathcal N$. Record $\Lambda(g)$ for each $g$.
\item For each ideal $\a_{\l}$, make a list $(g_{\l,1}),\ldots, (g_{\l,s_{\l}})$ of all elements of $\mathcal{P}$ contained in $\a_{\l}$ whose norms are at most $B\cdot N(\a_{\l})$.
\item For each index $\l$:
\begin{enumerate}
\item Make a list $R_{\l}$ of pairs $(i,j)$ such that $1\leq i < j \leq s_{\l}$ and $(g_{\l,i},g_{\l,j})=\a_{\l}$. 
\item For each pair $(i,j)$ in $R_{\l}$, use the data recorded in step (2) to compute $h_{\l,i,j}=h_K(g_{\l,i}/g_{\l,j})$.
\end{enumerate}
\item Let $d= \log B + \max_{\l}\max_{(i,j)\in R_{\l}} h_{\l,i,j}.$
\item Construct the matrix $S=S(\boldsymbol\epsilon)$ with column vectors $\Lambda'(\epsilon_1),\ldots, \Lambda'(\epsilon_r)$ and compute its inverse.
\item Construct a list $U$ consisting of all integer vectors $(n_1,\ldots,n_r)$ in the polytope $S^{-1}([-d,d]^r)$.
\item Create a list $L$ containing only the element 0, and create empty lists $U_0$ and $L_0$.
\item For each tuple $\boldsymbol u=(n_1,\ldots,n_r)$ in $U$:
	\begin{enumerate}
	\item Compute the vector $\Lambda_{\boldsymbol u}=\sum_{j=1}^rn_j\Lambda(\epsilon_j)$ using the data from step (1).
	\item Use $\Lambda_{\boldsymbol u}$ to compute $h_{\boldsymbol u}=h_K\left(\epsilon_1^{n_1} \cdots \epsilon_r^{n_r}\right)$.
	\item If $h_{\boldsymbol u} \le \log B$, then append $\boldsymbol u$ to $U_0$.
	\item If $h_{\boldsymbol u} > d$, then remove $\boldsymbol u$ from $U$.
	\end{enumerate}
	
\item For each index $\l$\;: 
\begin{itemize}[itemindent = -2mm]
\item[]For each pair $(i,j)\in R_{\l}$\;:
\begin{itemize}[itemindent = -5mm]
\item[] Let $w = \log B + h_{\l,i,j}$.
\item[] For each tuple $\boldsymbol u=(n_1,\ldots,n_r)$ in $U$:
\begin{enumerate}[itemindent = -6mm]
\item[] If $h_{\boldsymbol u}\leq w$, then:
\begin{enumerate}[itemindent = 1mm]
\item Let $P$ be the packet $\left(\l,(i,j),(n_1,\ldots,n_r)\right)$.
\item Use the data recorded in steps (2) and 9(a) to compute $h_K(c(P))$.
\item If $h_K(c(P)) \le \log B$, then append the packet $P$ to $L_0$.
\end{enumerate}
\end{enumerate}
\end{itemize}
\end{itemize}
\item Make a list consisting of the distinct tuples $(n_1,\ldots, n_r)$ appearing in $U_0$ or in some packet $P\in L_0$. Compute and store all units of the form $\epsilon_1^{n_1} \cdots \epsilon_r^{n_r}$ with $(n_1,\ldots, n_r)$ in this list.
\item For each tuple $(n_1,\ldots,n_r)$ in $U_0$, append to $L$ all numbers of the form $\zeta \epsilon_1^{n_1} \cdots \epsilon_r^{n_r}$ with $\zeta\in\mu_K$. Use the units computed in step (11).
\item For each packet $P$ in $L_0$, compute the number $c(P)$ using the data from step (11), and append to $L$ all numbers of the form $\zeta\cdot c(P)$ and $\zeta/c(P)$ with $\zeta\in\mu_K$.
\item Return the list $L$.
\end{enumerate}
\end{alg}

\begin{rems}\mbox{}
\begin{itemize}[itemsep = 1.1mm]
\item Step (11) is done in order to avoid computing the same unit more than once: it may well happen that two distinct packets $P$ and $P'$ contain the same tuple $(n_1,\ldots,n_r)$, so that when computing $c(P)$ and $c(P')$ we would carry out the multiplication $\epsilon_1^{n_1} \cdots \epsilon_r^{n_r}$ twice. Similarly, a tuple from the list $U_0$ may appear in some packet $P$. By doing step (11) we are thus avoiding unnecessary recalculations and reducing arithmetic with fundamental units.
\item Before carrying out step (10) it may prove useful to order the elements of $U$ according to the corresponding height $h_{\boldsymbol u}$. Even though this has an additional cost, it could prevent the unnecessary checking of many inequalities $h_{\boldsymbol u}\leq w$ in step (10): traversing the list $U$, one would check this inequality only until it fails once, and then no further elements of $U$ need to be considered.
\item In order to reduce both the cost of arithmetic operations in $K$ and the amount of memory used in carrying out steps (11) - (13) of Algorithm \ref{main2}, compact representations of algebraic integers could be used (see \cite{thiel}); this would be especially useful for storing and working with fundamental units. All the operations on algebraic numbers that are needed in Algorithm \ref{main2} can be done efficiently working only with compact representations, thus avoiding the need to store and compute with the numbers themselves.
\end{itemize}
\end{rems}

Note that several quantities appearing in Algorithm \ref{main2} involve real numbers, so that an implementation of the algorithm may require floating point arithmetic. For some applications this may not be an issue, but if one needs to know with certainty that all elements not exceeding the specified height bound have been found, then it is crucial to choose the precision for floating point calculations carefully. In the next section we will address this problem in detail.

\section{Error analysis}

There are two issues that must be considered in order to implement Algorithm \ref{main2} in such a way that the output is guaranteed to be complete and correct. These issues are due to the fact that in a computer we cannot work exactly with the real numbers that appear in the algorithm (heights of algebraic numbers, logarithms of real numbers, absolute values of algebraic numbers), so we must make do with rational approximations of them. We consider now the question of finding approximations that are good enough to guarantee correct results.

The first issue is that of computing the height of an algebraic number. In carrying out Algorithm \ref{main2} one must check inequalities of the form $h_K(x)\leq D$ for given $x\in K^{\ast}$ and $D\in\R$. In practice, one can only work with  rational approximations $\tilde h$ of $h_K(x)$ and $\tilde D$ of $D$, and check whether $\tilde h\leq\tilde D$. However, it may happen that $h_K(x)\leq D$ even though $\tilde h>\tilde D$. To deal with this problem one must be able to find arbitrarily close rational approximations of $h_K(x)$.

The second issue is that of enumerating lattice points inside a polytope, which is needed in Algorithm \ref{units1}. The polytopes considered are of the form $T(\B)$, where $\B=[-d,d]^r$ is a box in $\R^r$ and $T:\R^r\to\R^r$ is a linear isomorphism. In practice, the box $\B$ must be replaced by a box $\tilde\B$ with rational vertices, and the matrix of $T$ must be approximated by a rational matrix corresponding to a map $\tilde T$. We will not necessarily have an equality $\Z^r\cap T(\B)=\Z^r\cap \tilde T(\tilde\B)$, so lattice points may be lost in this approximation process. One must therefore take care to ensure that good enough approximations are found so that at least there is a containment $\Z^r\cap T(\B)\subseteq\Z^r\cap \tilde T(\tilde\B)$.

There are several ways of dealing with these issues, each one leading to a different implementation of the main algorithm. For concreteness, we describe in this section one way of solving these problems, and we give the corresponding modification of Algorithm \ref{main2}.

We introduce the following terminology to be used throughout this section: if $\vec x = (x_1,\ldots,x_m)$ is a vector in the Euclidean space $\R^m$, we say that $\vec y = (y_1,\ldots,y_m) \in \R^m$ is a \emph{$\delta$-approximation} of $\vec x$ if $|x_i - y_i| < \delta$ for all $1 \le i \le m$. 

\subsection{The height function} Given an element $x\in K^{\ast}$ and a real number $\lambda>0$, we wish to compute a rational number $\tilde h$ such that $|\tilde h-h_K(x)|<\lambda$. Writing $x=\alpha/\beta$ with $\alpha,\beta\in\o_K$ and using \eqref{height_via_log}, we see that $h_K(x)$ can be approximated by first finding good approximations of the vectors $\Lambda(\alpha)$ and $\Lambda(\beta)$.

\begin{lem}\label{height_delta}
Fix $\lambda > 0$ and set $\delta = \lambda/(r+2)$. Let $\alpha$, $\beta$ be nonzero elements of $\o_K$. Let $\tilde n$, $(s_1,\ldots,s_{r+1})$, and $(t_1,\ldots,t_{r+1})$ be $\delta$-approximations of $\log(N(\alpha,\beta))$, $\Lambda(\alpha)$, and $\Lambda(\beta)$, respectively. Then, with
	\[ \tilde h := -\tilde n + \sum_{i=1}^{r+1} \max\{s_i,t_i\} \]
we have $|h_K(\alpha/\beta) - \tilde{h}| < \lambda$.
\end{lem}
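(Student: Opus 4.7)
The plan is to write $h_K(\alpha/\beta)-\tilde h$ as a sum of $r+2$ error terms, each of which is controlled by $\delta$, and then use $\delta=\lambda/(r+2)$ to finish.

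First, I would invoke equation \eqref{height_via_log} applied to $\alpha$ and $\beta$: setting $\Lambda(\alpha)=(x_1,\ldots,x_{r+1})$ and $\Lambda(\beta)=(y_1,\ldots,y_{r+1})$, we have
\[ h_K(\alpha/\beta) \;=\; -\log N(\alpha,\beta) + \sum_{i=1}^{r+1}\max\{x_i,y_i\}. \]
Subtracting the defining expression for $\tilde h$ term by term gives
\[ h_K(\alpha/\beta)-\tilde h \;=\; \bigl(\tilde n-\log N(\alpha,\beta)\bigr) + \sum_{i=1}^{r+1}\bigl(\max\{x_i,y_i\}-\max\{s_i,t_i\}\bigr). \]

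Next I would bound each summand by $\delta$. The first is immediate from the hypothesis that $\tilde n$ is a $\delta$-approximation of $\log N(\alpha,\beta)$. For the remaining $r+1$ summands I would use the elementary Lipschitz-type inequality
\[ \bigl|\max\{a,b\}-\max\{c,d\}\bigr| \;\le\; \max\{|a-c|,|b-d|\}, \]
which (together with $|x_i-s_i|<\delta$ and $|y_i-t_i|<\delta$) gives $|\max\{x_i,y_i\}-\max\{s_i,t_i\}|<\delta$ for each $i$.

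Finally, applying the triangle inequality to the displayed identity and summing $r+2$ terms each bounded by $\delta$, I obtain $|h_K(\alpha/\beta)-\tilde h|<(r+2)\delta=\lambda$. There is no real obstacle here; the only thing worth being careful about is making sure the error terms add up to exactly $r+2$ copies of $\delta$ (one for the norm term plus $r+1$ for the coordinate maxima), which is why the choice $\delta=\lambda/(r+2)$ is the right one.
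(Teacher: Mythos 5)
Your proof is correct and follows the same decomposition as the paper's: apply \eqref{height_via_log}, split the error into the norm term plus $r+1$ coordinate terms, bound each by $\delta$, and sum. The only addition is that you make explicit the Lipschitz bound $|\max\{a,b\}-\max\{c,d\}|\le\max\{|a-c|,|b-d|\}$, which the paper's proof uses implicitly.
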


\begin{proof}
Let $\Lambda(\alpha)=(x_1,\ldots, x_{r+1})$ and $\Lambda(\beta)=(y_1,\ldots, y_{r+1})$. Using \eqref{height_via_log} we obtain
	\begin{align*}
		|h_K(\alpha/\beta) - \tilde{h}| \le |\tilde n - \log(N(\alpha,\beta))| + \sum_{i=1}^{r+1} \left|\max\{x_i,y_i\} - \max\{s_i,t_i\} \right|
		< (r+2)\delta = \lambda.
	\end{align*}
\end{proof}

For a nonzero element $y\in K$, each entry of the vector $\Lambda(y)$ is of the form $n_v \log |y|_v$ for some place $v\in M_K^{\infty}$. Corresponding to $v$ there is an embedding $\sigma:K\hookrightarrow\C$ such that $|y|_v=|\sigma(y)|$. Since $\sigma(y)$ is a complex root of the minimal polynomial of $y$, known methods (see \cite{pan}, for instance) can be applied to approximate $\sigma(y)$ with any given accuracy. In this way the vector $\Lambda(y)$ can be approximated to any required precision.

Lemma \ref{height_delta} provides a way of approximating the height of any element of $K$ by using the map $\Lambda$. However, in practice a slightly different method will be needed for computing heights of units. As mentioned in \S\ref{improvements}, in order to avoid costly arithmetic with fundamental units we do not work directly with units $u$ when carrying out Algorithm \ref{main2}. Hence, we cannot approximate the vector $\Lambda(u)$ by computing $|u|_v$ for every place $v$. Instead, a unit $u=\epsilon_1^{n_1} \cdots \epsilon_r^{n_r}$ is encoded by the tuple $(n_1,\ldots, n_r)$, so we need a way of approximating $h_K(u)$ given only this tuple. Since $\Lambda(u)=\sum_{j=1}^rn_j\Lambda(\epsilon_j)$, it is enough to approximate the vectors $\Lambda(\epsilon_i)$ sufficiently well; we make this precise in the following lemma.

\begin{lem}\label{unit_height_delta}
Fix $\lambda, M > 0$ and set $\delta = \lambda/(r(r+1)M)$. Let $\{\epsilon_1,\ldots,\epsilon_r\}$ be a system of fundamental units for $\o_K^{\times}$, and for each $j$ let $(s_{1,j},\ldots,s_{r+1,j})$ be a rational $\delta$-approximation of $\Lambda(\epsilon_j) = (x_{1,j},\ldots,x_{r+1,j})$. Suppose $u = \epsilon_1^{n_1} \cdots \epsilon_r^{n_r}$ is a unit with $|n_1|,\ldots,|n_r| \le M$. Then, with
	\[ \tilde h := \sum_{i=1}^{r+1} \max\left\{\sum_{j=1}^r n_js_{i,j}\;,0\right\} \]
we have 
	\[ |h_K(u) - \tilde h| < \lambda. \]
\end{lem}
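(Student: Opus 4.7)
The plan is to reduce everything to the elementary inequality $|\max\{a,0\} - \max\{b,0\}| \le |a-b|$, so the proof is essentially a careful bookkeeping of three nested sources of error: $r+1$ archimedean coordinates, $r$ fundamental units contributing to each coordinate, and the bound $M$ on $|n_j|$.

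First I would observe that because $u$ is a unit, $|u|_v = 1$ at every finite place, so by the definition of $h_K$ and the fact that $n_v = 1$ for real and $n_v = 2$ for complex places,
\[ h_K(u) = \sum_{v \in M_K^\infty} \max\{\log |u|_v^{n_v}, 0\} = \sum_{i=1}^{r+1} \max\{x_i, 0\}, \]
where $\Lambda(u) = (x_1, \ldots, x_{r+1})$. Equivalently, this is just \eqref{height_via_log} applied with $\alpha = u$ and $\beta = 1$, noting $N(u,1) = 1$.

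Next, since $\Lambda$ is a group homomorphism (as stated before Dirichlet's theorem),
\[ x_i = \sum_{j=1}^r n_j \, x_{i,j}. \]
Setting $y_i := \sum_{j=1}^r n_j s_{i,j}$, so that $\tilde h = \sum_{i=1}^{r+1} \max\{y_i, 0\}$, the triangle inequality combined with $|\max\{a,0\} - \max\{b,0\}| \le |a-b|$ gives
\[ |h_K(u) - \tilde h| \le \sum_{i=1}^{r+1} |x_i - y_i| = \sum_{i=1}^{r+1} \Bigl|\sum_{j=1}^r n_j (x_{i,j} - s_{i,j})\Bigr|. \]

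Finally I would apply the triangle inequality once more inside the outer sum and use the hypotheses $|n_j| \le M$ and $|x_{i,j} - s_{i,j}| < \delta$ to bound each inner sum by $rM\delta$. This yields $|h_K(u) - \tilde h| < (r+1) \cdot rM\delta = \lambda$ by the choice of $\delta$. There is no real obstacle here; the only point to be careful about is to invoke the unit hypothesis at the very start so that the ideal-norm term $\log N(\alpha,\beta)$ from Lemma~\ref{height_delta} drops out and no additional $\delta$-error needs to be accounted for.
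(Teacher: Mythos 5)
Your proof is correct and follows essentially the same path as the paper's: express $h_K(u)$ via \eqref{height_via_log} with $\beta=1$, use the homomorphism property of $\Lambda$ to write the $i$-th coordinate as $\sum_j n_j x_{i,j}$, and then apply $|\max\{a,0\}-\max\{b,0\}|\le|a-b|$ together with $|n_j|\le M$ and $|x_{i,j}-s_{i,j}|<\delta$ to get the $(r+1)\cdot rM\delta=\lambda$ bound. The paper just leaves the elementary $\max$-inequality implicit; otherwise the arguments are identical.
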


\begin{proof}
Since $\Lambda(u) = \sum_{j=1}^r n_j\Lambda(\epsilon_j)$, the $i$-th coordinate of $\Lambda(u)$ is given by $\sum_{j=1}^r n_jx_{i,j}$. Applying \eqref{height_via_log} to $u = u/1$ yields
	\[ h_K(u) = \sum_{i=1}^{r+1} \max\left\{\sum_{j=1}^r n_jx_{i,j}\;, 0\right\}. \]
Therefore,
	\begin{align*}
		|h_K(u) - \tilde h| = \left|\sum_{i=1}^{r+1} \left( \max\left\{\sum_{j=1}^r n_jx_{i,j} , 0\right\} - \max\left\{\sum_{j=1}^r n_js_{i,j} , 0\right\} \right) \right|
		\le \sum_{i=1}^{r+1} \sum_{j=1}^r |n_j| \cdot |x_{i,j} - s_{i,j}|
		< r(r+1)M\delta
		= \lambda. 
	\end{align*}
\end{proof}

\subsection{Units of bounded height} We use here the notation from $\S 3.2$. Let $d=\log D$ and let $\B=[-d,d]^r$. Algorithm \ref{units1} requires that we enumerate all integer lattice points in the polytope $S^{-1}(\B)$. In practice, the matrix $S$ must be replaced by a rational approximation $\tilde S$, and the box $\B$ by a rational box $\tilde\B$. We show here how to choose these approximations so that $S^{-1}(\B)\subseteq \tilde S^{-1}(\tilde\B)$. For the purpose of enumerating integer lattice points, we may then replace $\B$ with $\tilde\B$ and $S$ with $\tilde S$, thus avoiding errors arising from floating-point arithmetic.

For a vector $v\in\R^r$ we denote by $|v|$ the usual Euclidean norm of $v$, and for a linear map $L:\R^r\to\R^r$ we let $\|L\|$ denote the operator norm, \[\|L\|=\sup_{|x|\leq 1}|Lx|.\] We also denote by $L$ the matrix of $L$ with respect to the standard basis for $\R^r$. Recall that the supremum norm of $L$ is given by $\|L\|_{\text{sup}}:=\max_{i,j}|L_{i,j}|$, and that there is an inequality
\begin{equation}\label{op_norm_ineq} \|L\|\le r\sqrt{r}\cdot\|L\|_{\text{sup}}.\end{equation}

We begin with two results which will be useful for approximating the inverse of a matrix.

\begin{lem}\label{rudin_ineq} Let $V$ be an $r\times r$ invertible matrix over the real numbers, and let $\tilde V$ be a matrix such that \[\|\tilde V-V\|\cdot\|V^{-1}\|<1.\] Then $\tilde V$ is invertible and \[\|\tilde V^{-1}-V^{-1}\|\leq\frac{\|\tilde V-V\|\cdot\|V^{-1}\|^2}{1-\|\tilde V-V\|\cdot\|V^{-1}\|}.\]
\end{lem}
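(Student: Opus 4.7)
The plan is to reduce the statement to the standard Neumann series fact: if $\|E\|<1$ then $I+E$ is invertible with $\|(I+E)^{-1}\|\le 1/(1-\|E\|)$. To that end, I would factor
\[
\tilde V = V + (\tilde V - V) = V\bigl(I + V^{-1}(\tilde V - V)\bigr)
\]
and let $E := V^{-1}(\tilde V - V)$. By submultiplicativity of the operator norm and the hypothesis,
\[
\|E\| \le \|V^{-1}\|\cdot\|\tilde V - V\| < 1,
\]
so the Neumann series $\sum_{n\ge 0}(-E)^n$ converges absolutely in operator norm and gives $(I+E)^{-1}$, with $\|(I+E)^{-1}\|\le 1/(1-\|E\|)$. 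Hence $\tilde V = V(I+E)$ is invertible and $\tilde V^{-1} = (I+E)^{-1}V^{-1}$.

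Next I would obtain a clean algebraic identity for the difference of inverses. Writing
\[
(I+E)^{-1} - I = (I+E)^{-1}\bigl(I - (I+E)\bigr) = -(I+E)^{-1}E,
\]
it follows that
\[
\tilde V^{-1} - V^{-1} = \bigl((I+E)^{-1} - I\bigr)V^{-1} = -(I+E)^{-1}E\,V^{-1}.
\]
Taking operator norms and using submultiplicativity together with the Neumann bound,
\[
\|\tilde V^{-1} - V^{-1}\| \le \|(I+E)^{-1}\|\cdot\|E\|\cdot\|V^{-1}\| \le \frac{\|E\|\cdot\|V^{-1}\|}{1-\|E\|}.
\]

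Finally, I would pass from $\|E\|$ to the quantity appearing in the statement. Since $t\mapsto t/(1-t)$ is increasing on $[0,1)$ and $\|E\|\le \|V^{-1}\|\cdot\|\tilde V - V\| < 1$, we may replace $\|E\|$ by this upper bound in both numerator and denominator to obtain
\[
\|\tilde V^{-1} - V^{-1}\| \le \frac{\|\tilde V - V\|\cdot\|V^{-1}\|^{2}}{1-\|\tilde V - V\|\cdot\|V^{-1}\|},
\]
which is exactly the asserted inequality. There is no real obstacle here: the only subtlety is recognizing that one must apply the monotonicity of $t/(1-t)$ at the end rather than attempting a direct estimate, and remembering to justify convergence of the Neumann series from the strict inequality $\|E\|<1$.
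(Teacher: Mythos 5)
Your proof is correct, but it takes a different route from the one the paper relies on. The paper gives no self-contained argument for this lemma; it simply cites the proof of Theorem 9.8 in Rudin, which proceeds by a direct pointwise estimate rather than a Neumann series: writing $\alpha = 1/\|V^{-1}\|$ and $\beta = \|\tilde V - V\|$, one shows $(\alpha-\beta)|x| \le |\tilde V x|$ for all $x \in \R^r$, which yields injectivity of $\tilde V$ (hence invertibility, in finite dimensions) together with the bound $\|\tilde V^{-1}\| \le 1/(\alpha-\beta)$; the stated inequality then follows at once from the identity $\tilde V^{-1} - V^{-1} = \tilde V^{-1}(V - \tilde V)V^{-1}$. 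Your factorization $\tilde V = V(I+E)$ with $E = V^{-1}(\tilde V - V)$, together with the Neumann series for $(I+E)^{-1}$, is the more standard functional-analytic route and generalizes verbatim to Banach algebras; Rudin's argument avoids infinite series and extracts the bound on $\|\tilde V^{-1}\|$ directly from a norm inequality. One small contrast worth noticing: because Rudin estimates in terms of $\|\tilde V - V\|\cdot\|V^{-1}\|$ from the start, no monotonicity step is needed at the end, whereas your proof correctly passes from $\|E\|$ to the upper bound $\|V^{-1}\|\cdot\|\tilde V - V\|$ by observing that $t\mapsto t/(1-t)$ is increasing on $[0,1)$. Both arguments are complete and correct.
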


\begin{proof} See the proof of  Theorem 9.8 in \cite{rudin}. 
\end{proof}

Using \eqref{op_norm_ineq} we obtain:

\begin{cor}\label{inverse_approx} With $V$ as in the lemma, let $m$ be a constant with $m\geq r^2\cdot\|V^{-1}\|_{\mathrm{sup}}$. Given $\lambda>0$, let $\tilde V$ be a matrix such that $\|\tilde V-V\|_{\mathrm{sup}}<\frac{\lambda}{r^2(m^2+m\lambda)}$. Then $\tilde V$ is invertible and $\|\tilde V^{-1}-V^{-1}\|<\lambda$.
\end{cor}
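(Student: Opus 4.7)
The plan is to deduce Corollary \ref{inverse_approx} from Lemma \ref{rudin_ineq} by using inequality \eqref{op_norm_ineq} to translate the supremum-norm hypothesis into an operator-norm hypothesis. The constant $r^2(m^2 + m\lambda)$ appearing in the hypothesis is clearly tailored so that the bound in Lemma \ref{rudin_ineq} evaluates to exactly $\lambda$ at the boundary, so the proof should amount to unwinding this choice.

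First I would note that $r\sqrt r \le r^2$ for $r \ge 1$, which combined with \eqref{op_norm_ineq} gives $\|V^{-1}\| \le m$ from the hypothesis $m \ge r^2 \|V^{-1}\|_{\mathrm{sup}}$, and $\|\tilde V - V\| < \lambda/(m^2 + m\lambda)$ from the hypothesis on $\tilde V$. Multiplying these yields $\|\tilde V - V\| \cdot \|V^{-1}\| < \lambda/(m+\lambda) < 1$, which verifies the hypothesis of Lemma \ref{rudin_ineq} and in particular shows that $\tilde V$ is invertible.

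Next I would substitute these same bounds into the inequality supplied by Lemma \ref{rudin_ineq}: the numerator $\|\tilde V - V\| \cdot \|V^{-1}\|^2$ is bounded above by $m\lambda/(m+\lambda)$, while the denominator $1 - \|\tilde V - V\| \cdot \|V^{-1}\|$ is bounded below by $m/(m+\lambda)$, so their ratio is strictly less than $\lambda$. The argument is a routine bookkeeping exercise in inequalities; the only mild subtlety is tracking strict versus non-strict inequalities, and this is harmless since the function $x \mapsto x/(1-x)$ is strictly increasing on $[0,1)$. I do not anticipate any real obstacle.
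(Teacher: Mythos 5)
Your proof is correct and is essentially the argument the paper intends (the paper omits the details, simply writing ``Using \eqref{op_norm_ineq} we obtain:'' before the statement): pass from sup norm to operator norm via \eqref{op_norm_ineq} and the inequality $r\sqrt{r}\le r^2$, verify the hypothesis of Lemma \ref{rudin_ineq}, and then bound the numerator and denominator of the resulting estimate to extract $\lambda$.
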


We can now give the required accuracy in approximating the matrix $S$.

\begin{prop}\label{S_approx} Let $S$ be an invertible $r\times r$ matrix over the real numbers, and let $d$ be a positive real number. Given $\eta>0$, define $\tilde\B=[-d-\eta, d+\eta]^r$. Let $m$ be a real number such that \[m\geq r^2\cdot\max\{\|S\|_{\mathrm{sup}},\|S^{-1}\|_{\mathrm{sup}}\}.\] Define constants \[\lambda:=\frac{\eta}{dr(1+m)}\;\; and \;\; \delta:=\min\left\{\frac{\lambda}{r^2(m^2+m\lambda)},\frac{1}{r^2}\right\}.\]  

If $\tilde S$ is any $r\times r$ matrix such that $\|\tilde S-S\|_{\mathrm{sup}}<\delta,$ then $\tilde S$ is invertible and $S^{-1}(\B)\subseteq\tilde S^{-1}(\tilde\B)$. 
\end{prop}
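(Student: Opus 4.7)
The plan is to establish invertibility of $\tilde S$ and the containment $S^{-1}(\B)\subseteq \tilde S^{-1}(\tilde\B)$ separately. For invertibility, the choice $\delta\le \lambda/(r^2(m^2+m\lambda))$, combined with $m\ge r^2\|S^{-1}\|_{\mathrm{sup}}$, puts the hypothesis of Corollary \ref{inverse_approx} directly within reach: applying it to $V=S$ and $\tilde V=\tilde S$ shows $\tilde S$ is invertible (and in fact $\|\tilde S^{-1}-S^{-1}\|<\lambda$, though this stronger conclusion will not be needed below).

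For the containment, I would rewrite $S^{-1}(\B)\subseteq \tilde S^{-1}(\tilde\B)$ as $\tilde S(S^{-1}(\B))\subseteq \tilde\B$. Fix an arbitrary $y\in\B$ and set $v=S^{-1}y$; the goal is $\tilde Sv\in\tilde\B$. Write
\[ \tilde Sv = y + (\tilde S - S)v. \]
Since $\|y\|_{\mathrm{sup}}\le d$ and $\tilde\B=[-d-\eta,d+\eta]^r$, the containment reduces to the estimate $\|(\tilde S-S)v\|_{\mathrm{sup}}\le \eta$.

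To prove this estimate I would apply twice the elementary inequality $\|Aw\|_{\mathrm{sup}}\le r\|A\|_{\mathrm{sup}}\|w\|_{\mathrm{sup}}$, which is immediate from expanding a matrix--vector product componentwise. First, $\|v\|_{\mathrm{sup}}=\|S^{-1}y\|_{\mathrm{sup}}\le r\|S^{-1}\|_{\mathrm{sup}}\cdot d\le r\cdot(m/r^2)\cdot d = md/r$. Second, $\|(\tilde S-S)v\|_{\mathrm{sup}}\le r\delta\cdot(md/r)=\delta md$. The proof then reduces to checking $\delta md\le \eta$. Substituting $\delta\le \lambda/(r^2 m(m+\lambda))$ and $\lambda=\eta/(dr(1+m))$ rewrites the required inequality as $r^3(1+m)(m+\lambda)\ge 1$.

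That final arithmetic inequality is the only genuine obstacle, since a priori $m$ could be tiny. However, the submultiplicativity $\|S\|\cdot\|S^{-1}\|\ge 1$ for the operator norm, together with the inequality $\|A\|\le r\sqrt{r}\,\|A\|_{\mathrm{sup}}$ used in Corollary \ref{inverse_approx}, forces $\max(\|S\|_{\mathrm{sup}},\|S^{-1}\|_{\mathrm{sup}})\ge r^{-3/2}$, whence $m\ge \sqrt{r}\ge 1$. This yields $r^3(1+m)(m+\lambda)\ge r^3 m\ge 1$ at once, closing the argument. The auxiliary constraint $\delta\le 1/r^2$ in the definition of $\delta$ is a safety bound (keeping $\tilde S$ uniformly close to $S$ in sup norm) and does not appear to be needed for the specific inclusion being proved.
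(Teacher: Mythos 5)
Your proof is correct, but it takes a genuinely different route from the paper's. Both proofs invoke Corollary \ref{inverse_approx} to establish that $\tilde S$ is invertible, but they diverge from there. The paper's argument uses the \emph{full} conclusion of the corollary, namely $\|\tilde S^{-1}-S^{-1}\|<\lambda$, and estimates $|\tilde S(S^{-1}x)-x|=|\tilde S(S^{-1}x)-\tilde S(\tilde S^{-1}x)|\le\|\tilde S\|\cdot\|S^{-1}-\tilde S^{-1}\|\cdot|x|$ in the Euclidean norm. Making that work requires a bound on $\|\tilde S\|$, which in turn is where the auxiliary constraint $\delta\le 1/r^2$ earns its keep: it gives $\|\tilde S\|_{\mathrm{sup}}\le\|S\|_{\mathrm{sup}}+\delta\le (m+1)/r^2$, hence $\|\tilde S\|\le(m+1)/\sqrt r$, and the factor $(1+m)$ then cancels against the $(1+m)$ in the denominator of $\lambda$. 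Your argument, by contrast, stays entirely in the sup norm, never touches $\|\tilde S^{-1}-S^{-1}\|$, and works with $\|\tilde S-S\|_{\mathrm{sup}}<\delta$ directly via the decomposition $\tilde Sv=y+(\tilde S-S)v$. This is more elementary in that it only needs the invertibility half of the corollary and sidesteps the $\delta\le 1/r^2$ constraint altogether; the price you pay is the extra condition-number observation $m\ge\sqrt r\ge 1$, deduced from $\|S\|\cdot\|S^{-1}\|\ge 1$ and \eqref{op_norm_ineq}, which the paper's proof never needs. Both arguments are sound; yours is a bit more self-contained on the matrix-perturbation side but slightly more delicate on the arithmetic side. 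One small note: you are correct that $\delta\le 1/r^2$ is not needed for your version of the inclusion, but it is not merely a cosmetic safety bound in the paper --- it is exactly what lets the paper's Euclidean-norm chain close.
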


\begin{proof} It follows from Corollary \ref{inverse_approx} that $\tilde S$ is invertible and $\|\tilde S^{-1}-S^{-1}\|<\lambda$. For any $x\in\B$ we then have \[|\tilde S(S^{-1}x)-x| = |\tilde S(S^{-1}x) - \tilde S(\tilde S^{-1}x)| \le \|\tilde S\|\cdot \|S^{-1}-\tilde S^{-1}\|\cdot |x|<\eta.\] Hence, we see that $\tilde S(S^{-1}x) \in\tilde \B$, so $S^{-1}x\in \tilde S^{-1}(\tilde \B)$ and this completes the proof.
\end{proof}

Proposition \ref{S_approx} reduces the problem of finding an adequate approximation of $S$ to finding upper bounds for $\|S\|_{\mathrm{sup}}$ and $\|S^{-1}\|_{\mathrm{sup}}$. The former can be easily done using any approximation of $S$. One way of finding an upper bound for $\|S^{-1}\|_{\mathrm{sup}}$ is to use the fact that $S^{-1}=\frac{1}{\det(S)}\cdot A$, where $A$ is the adjugate matrix of $S$. By approximating $S$ one can obtain a lower bound for $\det(S)$ and an upper bound for the entries of $A$.

\subsection{Revised algorithm}\label{revised} Using the methods described above we give a new version of Algorithm \ref{main2} which takes precision issues into account. We assume here that $r>0$; the case $r=0$ is treated in \S\ref{bdd_height_quad} for imaginary quadratic fields, and is trivial when $K=\Q$.

\begin{alg}[Algebraic numbers of bounded height]\label{main3}\mbox{}\\
Input: A number field $K$, a bound $B\geq 1$, and a tolerance $\theta \in (0,1]$, with $B,\theta\in\Q$.\\
Output: Two lists, $L$ and $L'$, such that:
\begin{itemize}
\item If $x\in K$ satisfies $H_K(x)\leq B$, then $x$ is in either $L$ or $L'$.
\item For every $x\in L$, $H_K(x)< B$.
\item For every $x\in L'$,  $|H_K(x)-B|<\theta$.
\end{itemize}

\smallskip
\begin{enumerate}[itemsep = 1.1mm]
\item Set $t = \theta/(3B)$ and let $\delta_1 = t/(6r+12)$. Find a complete set $\{\a_1,\ldots,\a_h\}$ of ideal class representatives for $\o_K$, and for each index $\l$ compute a rational $\delta_1$-approximation of $\log(N(\a_{\l}))$.
\item Construct the set \[\mathcal N:=\bigcup_{\l=1}^h\{m\cdot N(\a_{\l}):m\le B\}\] and make a list $\mathcal{P}$ of all nonzero principal ideals of $\o_K$, each represented by a single generator $g$, having norm in $\mathcal N$. For each $g$, find a $\delta_1$-approximation of $\Lambda(g)$.
\item For each ideal $\a_{\l}$, make a list $(g_{\l,1}),\ldots, (g_{\l,s_{\l}})$ of all elements of $\mathcal{P}$ contained in $\a_{\l}$ whose norms are at most $B\cdot N(\a_{\l})$.
\item For each index $\l$:

\begin{enumerate}
\item Make a list $R_{\l}$ of pairs $(i,j)$ such that $1\leq i < j \leq s_{\l}$ and $(g_{\l,i},g_{\l,j})=\a_{\l}$.
\item For each pair $(i,j)$ in $R_{\l}$:
\begin{itemize}[itemindent = -5mm]
\item[] Use Lemma \ref{height_delta} and data from steps (1) and (2) to find a rational approximation of $h_K(g_{\l,i}/g_{\l,j})$.
\item[] The result will be a rational number $r_{\l,i,j}$ such that $|r_{\l,i,j}-h_K(g_{\l,i}/g_{\l,j})|<t/6$.
\end{itemize}
\end{enumerate}

\item Find a rational number $b$ such that $\frac{t}{12}<b-\log(B)<\frac{t}{4}$ and set $\tilde d = b + \frac{t}{6} + \max_{\l}\max_{(i,j)\in R_{\l}} r_{\l,i,j}$. 
\item Compute a system of fundamental units $\boldsymbol\epsilon=\{\epsilon_1,\ldots,\epsilon_r\}$ and find a constant $m$ such that \[m\geq r^2\cdot\max\{\|S(\boldsymbol\epsilon)\|_{\mathrm{sup}},\|S(\boldsymbol\epsilon)^{-1}\|_{\mathrm{sup}}\}.\]
\item Define constants \[\tilde\lambda=\frac{t/12}{\tilde dr(1+m)}, \;\tilde\delta=\min\left\{\frac{\tilde\lambda}{r^2(m^2+m\tilde\lambda)},\frac{1}{r^2}\right\}, \;M=\lceil\tilde d(m+\tilde\lambda \sqrt r)\rceil,\; \delta_2=\min\left\{\tilde\delta,\frac{t/6}{r(r+1)M}\right\}.\]

\item Compute $\delta_2$-approximations $v_1,\ldots, v_r$ of the vectors $\Lambda(\epsilon_1),\ldots, \Lambda(\epsilon_r)$, construct the $r\times r$ matrix $\tilde S$ whose $j$-th column is the vector $v_j$ with its last coordinate deleted, and compute $\tilde S^{-1}$.
\item Construct a list $U$ consisting of all integer vectors $(n_1,\ldots,n_r)$ in the polytope $\tilde S^{-1}([-\tilde{d}, \tilde d]^r)$.
\item Create a list $L$ containing only the element 0, and create empty lists $U_0, U_0'$ and $L_0,L_0'$.
\item For each tuple $\boldsymbol u=(n_1,\ldots,n_r)$ in $U$:
	\begin{enumerate}
	\item Compute the vector $\tilde\Lambda_{\boldsymbol u}=\sum_{j=1}^rn_jv_j$.
	\item Using $\tilde\Lambda_{\boldsymbol u}$ and Lemma \ref{unit_height_delta}, find a rational approximation of $h_K\left(\epsilon_1^{n_1} \cdots \epsilon_r^{n_r}\right)$. The result will be a rational number $r_{\boldsymbol u}$ such that  $|r_{\boldsymbol u}-h_K\left(\epsilon_1^{n_1} \cdots \epsilon_r^{n_r}\right)|<t/6.$
	\item If $r_{\boldsymbol u} < b-\frac{5}{12}t$, then append $\boldsymbol u$ to $U_0$.
	\item If $b-\frac{5}{12}t\le r_{\boldsymbol u}< b+\frac{1}{12}t$, then append $\boldsymbol u$ to $U_0'$.
	\item If $r_{\boldsymbol u}>t/12+\tilde d$, then remove $\boldsymbol u$ from $U$.
	\end{enumerate}
\item For each index $\l$\;: 
\begin{itemize}[itemindent = -2mm]
\item[] For each pair $(i,j)\in R_{\l}$\;: 
\begin{itemize}[itemindent = -6mm]
\item[] Let $w=b + r_{\l,i,j}+\frac{1}{4}t$.
\item[] For each tuple $\boldsymbol u=(n_1,\ldots,n_r)$ in $U$:
\begin{itemize}[itemindent = -8mm]
\item[] If $r_{\boldsymbol u}< w$, then:

\begin{enumerate}[itemindent = -1mm]
\item Let $P$ be the packet $\left(\l,(i,j),(n_1,\ldots,n_r)\right)$.
\item Use the data from steps (1), (2), and 11(a), together with \eqref{height_via_log}, to find a rational approximation of $h_K(c(P))$. The result will be a rational number $r_P$ with $|r_P-h_K(c(P))|<t/3$.
\item If $r_P\le b-\frac{7}{12}t$, then append the packet $P$ to $L_0$.
\item If $b-\frac{7}{12}t<r_P< b+\frac{1}{4}t$, then append the packet $P$ to $L_0'$.
\end{enumerate}
\end{itemize}
\end{itemize}
\end{itemize}
\item Make a list consisting of the distinct tuples $(n_1,\ldots, n_r)$ appearing in $U_0,U_0'$ or in some packet $P$ in $L_0$ or $L_0'$. Compute and store all units of the form $\epsilon_1^{n_1} \cdots \epsilon_r^{n_r}$ with $(n_1,\ldots, n_r)$ in this list.
\item For each tuple $(n_1,\ldots,n_r)$ in $U_0$, append to $L$ all numbers of the form $\zeta \epsilon_1^{n_1} \cdots \epsilon_r^{n_r}$ with $\zeta\in\mu_K$, and similarly for $U_0'$ and $L'$. Use the units computed in step (13).
\item For each packet $P$ in $L_0$, append to $L$ all numbers of the form $\zeta\cdot c(P)$ and $\zeta/c(P)$ with $\zeta\in\mu_K$, and similarly for $L_0'$ and $L'$. Use the data from step (13) when computing the numbers $c(P)$.
\item Return the lists $L$ and $L'$.
\end{enumerate}
\end{alg}

We make the following comments regarding various steps of Algorithm \ref{main3}:

\begin{itemize}[itemsep = 1.1mm]
\item Let $S=S(\boldsymbol\epsilon)$. With $d$ as in step (5) of Algorithm \ref{main2} we have $\tilde d>d+t/12$. Therefore, if we set $\eta = t/12$ and let $\lambda$ and $\delta$ be defined as in Proposition \ref{S_approx}, then $\tilde \lambda < \lambda$ and $\tilde \delta \le \delta$. By construction, $\|\tilde S-S\|_{\mathrm{sup}}<\tilde\delta \le \delta$, so that by Proposition \ref{S_approx} we have
	\[ S^{-1}([-d,d]^r)\subseteq \tilde S^{-1}([-d-\eta,d+\eta]^r) \subseteq \tilde S^{-1}([-\tilde d, \tilde d]^r).\]
\item In order to use Lemma \ref{unit_height_delta} in step 11(b) we must know that $|n_i|\leq M$ for all $1 \le i \le r$. Since $\boldsymbol u\in\tilde S^{-1}([-\tilde{d}, \tilde d]^r)$, we clearly have the upper bound $|n_i|\leq \tilde d\sqrt r\|\tilde S^{-1}\|$. By Corollary \ref{inverse_approx}, $\|\tilde S^{-1}-S^{-1}\|<\tilde\lambda$, so applying \eqref{op_norm_ineq} we have $\|\tilde S^{-1}\|\leq r\sqrt r\|S^{-1}\|_{\mathrm{sup}}+\tilde\lambda$. It follows that $|n_i|\leq \tilde d(m+\tilde\lambda\sqrt r) \le M$.
\item The condition $r_{\boldsymbol u} + \frac{5}{12}t < b$ from step 11(c) implies that $h_{\boldsymbol u}<\log B$; the condition $b - \frac{5}{12}t \le r_{\boldsymbol u} < b + \frac{1}{12}t$ from step 11(d) implies $|h_{\boldsymbol u}-\log B|<t$. Moreover, every $\boldsymbol u = (n_1,\ldots,n_r)$ for which $h_K(\epsilon_1^{n_1} \cdots \epsilon_r^{n_r}) \le \log B$ is in $U_0$ or $U_0'$, since $h_{\boldsymbol u} \le \log B$ implies $r_{\boldsymbol u} < b + \frac{1}{12}t$.
\item The condition $r_{\boldsymbol u} - t/12 > \tilde d$ from step 11(e) implies that $h_{\boldsymbol u}>d$.
\item The condition $r_P + \frac{7}{12}t \le b$ from step 12(c) implies that $h_K(c(P))<\log B$; the condition $b - \frac{7}{12}t < r_P < b + \frac{1}{4}t$ from step 12(d) implies that $|h_K(c(P))-\log B|<t$. Moreover, every packet $P$ with $h(c(P))\le\log B$ is in either $L_0$ or $L_0'$, since $h_K(c(P))\leq \log B$ implies that $r_P<b+\frac{t}{4}$.
\item Elements $x \in L$ satisfy $H_K(x) < B$, since they come from tuples in $U_0$ and packets in $L_0$. Elements $x \in L'$ satisfy $|h_K(x) - \log B| < t$, which implies that $|H_K(x)-B|<\theta$ by the Mean Value Theorem.
\end{itemize}

Note that the list $L'$ of Algorithm \ref{main3} consists of elements $x\in K$ whose heights are so close to $B$ that it is not possible to decide whether $H_K(x)\leq B$ with the tolerance specified as input. In particular, $L'$ might contain elements of height exactly $B$. For general number fields $K$ we cannot prevent this from occurring; however, for quadratic fields we can prevent it, as explained below.

\subsection{Case of quadratic fields}\label{bdd_height_quad}

We give in this section a way to shorten the list $L'$ from Algorithm \ref{main3} in the case of real quadratic fields, and to eliminate it altogether in the case of imaginary quadratic fields.

\begin{prop} \label{quads} Let $K$ be a quadratic field and let $x\in K^{\ast}$. Let $\sigma$ be the generator of $\Gal(K/\Q)$.

\begin{enumerate}[itemsep = 1.1mm] 
\item If $K$ is an imaginary field, then $H_K(x)$ is an integer.
\item If $K$ is a real field, then $H_K(x)\in\Q$ if and only if $\max\{|x|,|\sigma(x)|\}\leq1$ or $\min\{|x|,|\sigma(x)|\}\geq 1$. Moreover, if $H_K(x)\in\Q$, then $H_K(x)\in\Z$.
\end{enumerate}

\end{prop}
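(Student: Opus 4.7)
The plan is to write $x = \alpha/\beta$ with $\alpha, \beta \in \o_K$ nonzero, let $\a = (\alpha, \beta)$, and apply the third property of $H_K$ from \S2:
\[H_K(x) = N(\a)^{-1} \prod_{v \in M_K^{\infty}} \max\{|\alpha|_v^{n_v}, |\beta|_v^{n_v}\}.\]
Both parts then come down to tracking the single archimedean factor (imaginary case) or a short case analysis on which term each of the two $\max$'s selects (real case). Throughout I will use the divisibilities $N(\a) \mid N_{K/\Q}(\alpha)$ and $N(\a) \mid N_{K/\Q}(\beta)$, both of which follow from $(\alpha), (\beta) \subseteq \a$.

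For part (1), the key observation is that the single archimedean place $v$ has $n_v = 2$ and its embedding $\sigma: K \hookrightarrow \C$ satisfies $\overline{\sigma(\gamma)} = \sigma(\sigma'(\gamma))$, where $\sigma'$ denotes the Galois generator of $K/\Q$. Hence $|\alpha|_v^{n_v} = \sigma(\alpha)\overline{\sigma(\alpha)} = N_{K/\Q}(\alpha)$, and similarly for $\beta$. The formula collapses to $H_K(x) = \max\{N_{K/\Q}(\alpha), N_{K/\Q}(\beta)\}/N(\a)$, which is a positive integer by the divisibilities just noted.

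For part (2), both archimedean places have $n_v = 1$ and correspond to $\mathrm{id}$ and to the Galois generator $\sigma$, so
\[H_K(x) = N(\a)^{-1}\max\{|\alpha|,|\beta|\}\max\{|\sigma(\alpha)|,|\sigma(\beta)|\}.\]
Both $\max$'s select the $\beta$-term if and only if $|\alpha| \le |\beta|$ and $|\sigma(\alpha)|\le|\sigma(\beta)|$, that is, $\max\{|x|,|\sigma(x)|\}\le 1$; they both select $\alpha$ if and only if $\min\{|x|,|\sigma(x)|\}\ge 1$. In either matched case the product telescopes to $|N_{K/\Q}(\beta)|$ or $|N_{K/\Q}(\alpha)|$, so $H_K(x)$ is a positive integer; this handles the ($\Leftarrow$) direction and the ``moreover'' clause simultaneously.

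For the ($\Rightarrow$) direction I would proceed by contrapositive. Assume the $\max$'s mismatch, so without loss of generality $|\alpha| < |\beta|$ and $|\sigma(\alpha)| > |\sigma(\beta)|$, giving $H_K(x) = |\sigma(\alpha)\beta|/N(\a)$. The only nontrivial step is the sublemma that for $\gamma \in K = \Q(\sqrt d)$, $|\gamma| \in \Q$ implies $\gamma \in \Q$: writing $\gamma = a + b\sqrt d$, the identity $|\gamma|^2 = \gamma^2 \in \Q$ forces $ab = 0$, and the subcase $a = 0$, $b \ne 0$ is excluded because $|\gamma| = |b|\sqrt d \notin \Q$. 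Applied to $\gamma = \sigma(\alpha)\beta$, this yields $\sigma(\alpha)\beta = \alpha\sigma(\beta)$, hence $x = \sigma(x) \in \Q$, which forces $|x| = |\sigma(x)|$, contradicting the strict mismatch. This sublemma is the main (and essentially only) obstacle; the rest is direct bookkeeping with the height formula.
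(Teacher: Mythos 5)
Your proof is correct and follows essentially the same approach as the paper's: both start from the formula $H_K(\alpha/\beta) = N(\a)^{-1}\prod_{v\in M_K^\infty}\max\{|\alpha|_v^{n_v},|\beta|_v^{n_v}\}$, split into the same three cases (both maxima select $\beta$, both select $\alpha$, or they mismatch), and in the mismatch case reduce to the same elementary fact that a real element $\gamma$ of a real quadratic field has $|\gamma|\in\Q$ only when $\gamma\in\Q$. The paper streamlines the integrality bookkeeping by writing $(\alpha)=\a I$, $(\beta)=\a J$ so that the first two cases give $N(J)$ and $N(I)$ directly, but this is a cosmetic difference; also note your sublemma can be shortened to the one-line observation that a real $\gamma$ satisfies $\gamma=\pm|\gamma|$.
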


\begin{proof} Write $x=a/b$ with $a,b\in \o_K$, and let $\a=(a,b)$ be the ideal generated by $a$ and $b$ in $\o_K$. Then $H_K(x)=N(\a)^{-1}\prod_{v\in M_K^{\infty}}\max\{|a|_v^{n_v},|b|_v^{n_v}\}$. There are coprime ideals $I$ and $J$ of $\o_K$ such that $(a)=\a\cdot I$ and $(b)=\a\cdot J$. We then have $H_K(x)=N(J)\prod_{v\in M_K^{\infty}}\max\{|x|_v^{n_v},1\}$.
\noindent 
\begin{enumerate}[itemsep = 1.1mm] 
\item If $K$ is an imaginary field, then $H_K(x)=N(\a)^{-1}\max\{N_{K/\Q}(a),N_{K/\Q}(b)\}=\max\{N(I),N(J)\}\in\Z.$
\item If $K$ is a real field, then $H_K(x)=N(J)\max\{|x|,1\}\cdot \max\{|\sigma(x)|,1\}$. If $\max\{|x|,|\sigma(x)|\}\leq1$, then $H_K(x)=N(J)\in\Z$. If $\min\{|x|,|\sigma(x)|\}\geq 1$, then \[H_K(x)=N(J)|N_{K/\Q}(x)|=N(J)N(a)/N(b)=N(I)\in\Z.\] Now suppose that $\max\{|x|,|\sigma(x)|\}> 1$ and $\min\{|x|,|\sigma(x)|\}<1$. Then, without loss of generality we may assume that $|x|<1<|\sigma(x)|$. It follows that $x\notin\Q$, so $H_K(x)=N(J)|\sigma(x)|\notin\Q$.
\end{enumerate}
\end{proof}

In the case of real quadratic fields it is possible to detect some elements of the list $L'$ from Algorithm \ref{main3} which should be in the list $L$: if $x\in L'$ has height $H_K(x)\in\Q$ --- a condition which can be determined using Proposition \ref{quads} ---  then by construction of $L'$ it must be the case that $H_K(x)$ is the unique integer closest to $B$ (assuming the tolerance $\theta$ from Algorithm \ref{main3} was chosen to be less than $1/2$). If the nearest integer is $\lfloor B\rfloor$, then $x$ can then be deleted from $L'$ and appended to $L$. Otherwise, $x$ can be deleted from $L'$. At the end of this process the list $L'$ will only contain elements of $K$ whose heights are irrational and very close to $B$.

For imaginary quadratic fields $K$ there is a modification of Algorithm \ref{main2} that allows us to determine elements of bounded height without doing any height computations. Thus, for such fields we avoid the need for a list $L'$ as in Algorithm \ref{main3}.

With the notation and terminology of \textsection\ref{main_thm_section} we have:

\begin{thm}\label{iq_thm} Let $K$ be an imaginary quadratic field. Then \[\{\gamma\in K^{\ast}:H_K(\gamma)\leq B\}=\mu_K\cup\bigcup_{B\text{-packets\;} P} F(P).\]

\end{thm}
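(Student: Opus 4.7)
The plan is to combine Theorem~\ref{main_thm} with the special structure of imaginary quadratic fields, where $r = r_1 + r_2 - 1 = 0 + 1 - 1 = 0$, so the unit group reduces to $\o_K^\times = \mu_K$ and every packet has the simple form $P = (\l,(i,j))$ with $c(P) = g_{\l,i}/g_{\l,j}$.

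The forward inclusion is essentially free: if $\gamma \in K^\ast$ satisfies $H_K(\gamma) \leq B$, then Theorem~\ref{main_thm} (as noted in the Remark following it) says either $\gamma \in \o_K^\times$ or $\gamma \in F(P)$ for some $B$-packet $P$. Since $\o_K^\times = \mu_K$ in this case, the first alternative places $\gamma$ in $\mu_K$, so $\gamma$ lies on the right-hand side.

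For the reverse inclusion, I need to check each piece is bounded in height by $B$. Any $\zeta \in \mu_K$ has $H_K(\zeta) = 1 \leq B$, so $\mu_K$ causes no trouble. For $\gamma \in F(P)$, since all elements of $F(P)$ have the same height (noted in the paper) and $H_K(\zeta c(P)) = H_K(\zeta/c(P)) = H_K(c(P))$, it suffices to show $H_K(c(P)) \leq B$. Write $c(P) = g_{\l,i}/g_{\l,j}$ with $(g_{\l,i},g_{\l,j}) = \a_\l$, so by the third bullet in the properties of $H_K$,
\[ H_K(c(P)) = N(\a_\l)^{-1} \prod_{v \in M_K^\infty} \max\{|g_{\l,i}|_v^{n_v},|g_{\l,j}|_v^{n_v}\}. \]
Here is where the imaginary quadratic hypothesis does the real work: $M_K^\infty$ consists of a single complex place $v$ with $n_v = 2$, and for any $g \in K$ one has $|g|_v^2 = |N_{K/\Q}(g)|$. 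By the defining property of the $g_{\l,\bullet}$ (they generate principal ideals in $\a_\l$ of norm at most $B \cdot N(\a_\l)$), both $|g_{\l,i}|_v^{n_v}$ and $|g_{\l,j}|_v^{n_v}$ are bounded by $B \cdot N(\a_\l)$, which yields $H_K(c(P)) \leq N(\a_\l)^{-1} \cdot B \cdot N(\a_\l) = B$.

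The only mildly subtle point — the thing that genuinely fails in the non-totally-imaginary case and therefore explains why this equality is restricted to imaginary quadratic fields — is the identification $|g|_v^{n_v} = |N_{K/\Q}(g)|$, which needs $M_K^\infty$ to be a single place covering all of $N_{K/\Q}$. Once that observation is in hand, the proof is a direct bookkeeping of the height formula and takes essentially no further computation.
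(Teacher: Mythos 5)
Your proof is correct and follows the paper's argument essentially verbatim: both use Theorem~\ref{main_thm} together with $\o_K^\times = \mu_K$ for the forward inclusion, and both establish the reverse inclusion by noting that when $M_K^\infty$ consists of a single place, $N(\a_\l)\,H_K(c(P)) = \max\{N_{K/\Q}(g_{\l,i}),N_{K/\Q}(g_{\l,j})\} \le B\cdot N(\a_\l)$. Your explicit flagging of why the identification $|g|_v^{n_v} = |N_{K/\Q}(g)|$ requires a single archimedean place is a nice clarification of the role of the imaginary quadratic hypothesis, but it is the same proof.
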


\begin{proof} One containment follows from Theorem \ref{main_thm}, since $\o_K^{\times}=\mu_K$ in this case. It is therefore enough to show that $H_K(c(P))\leq B$ for every packet $P$. Letting $P=(\l,(i,j))$ we have \[N(\a_{\l}) H_K(c(P))=\prod_{v\in M_K^{\infty}}\max\{|g_{\l,i}|_v^{n_v},|g_{\l,j}|_v^{n_v}\}=\max\{N_{K/\Q}(g_{\l,i}),N_{K/\Q}(g_{\l,j})\}\leq B\cdot N(\a_{\l}).\] Hence, $H_K(c(P))\leq B$.
\end{proof}

Theorem \ref{iq_thm} leads to the following algorithm.

\begin{alg}[Numbers of bounded height in an imaginary quadratic field]\label{iq}\mbox{}\\
Input: An imaginary quadratic field $K$ and a bound $B\geq 1$.\\
Output: A list of all elements $x\in K$ satisfying $H_K(x)\leq B$.
\begin{enumerate}[itemsep = 1.1mm]
\item Create a list $L$ containing 0 and all elements of $\mu_K$.
\item Find a complete set $\{\a_1,\ldots,\a_h\}$ of ideal class representatives for $\o_K$.
\item Construct the set \[\mathcal N:=\bigcup_{\l=1}^h\{m\cdot N(\a_{\l}):m\le B\}\] and make a list $\mathcal{P}$ of all nonzero principal ideals of $\o_K$, each represented by a single generator $g$, having norm in $\mathcal N$.
\item For each ideal $\a_{\l}$, make a list $(g_{\l,1}),\ldots, (g_{\l,s_{\l}})$ of all elements of $\mathcal{P}$ contained in $\a_{\l}$ whose norms are at most $B\cdot N(\a_{\l})$.
\item For each index $\l$:
\begin{itemize}[itemindent = -2mm]
\item[] For each pair of indices $(i,j)$ such that $1\leq i < j \leq s_{\l}$ and $(g_{\l,i},g_{\l,j})=\a_{\l}$:
\begin{itemize}[itemindent = -6mm]
\item[] Let $c=g_{\l,i}/g_{\l,j}$ and append to $L$ all elements of the form $\zeta \cdot c$ and $\zeta/c$ with $\zeta\in\mu_K$.
\end{itemize}
\end{itemize}
\item Return the list $L$.
\end{enumerate}
\end{alg}

Note that, by Theorem \ref{main_thm}, the list $L$ will not contain duplicate elements.

\section{Efficiency of the algorithm}\label{efficiency}

We discuss in this section a measure of the efficiency of Algorithm \ref{main2} --- henceforth abbreviated A3 --- and of the algorithm of Peth\H{o} and Schmitt --- abbreviated PS --- proposed in \cite{petho/schmitt}. Given a number field $K$ and a height bound $B$, both methods begin by computing some basic data attached to $K$: in the case of PS we need an integral basis for $\o_K$, and in the case of A3 we need also the ideal class group and a set of fundamental units. After this step, both methods construct a set of elements of $K$ which is known to contain the desired set of numbers of bounded height; this larger set will be called the {\it search space} of the method and denoted by $\mathcal S_{\mathrm{PS}}(B)$ or $\mathcal S_{\mathrm A3}(B)$. Once a search space is known, the two methods proceed to compute the height of each element in this set and check whether it is smaller than $B$. We will measure the efficiency of a method by comparing the size of the search space to the size of the set of elements of height $\le B$. Thus, we define the {\it search ratio} of PS to be the number \[\sigma_{\mathrm{PS}}(B) := \frac{\#\mathcal S_{\mathrm{PS}}(B)}{\#\{x\in K:H_K(x)\leq B\}}\;,\] 
and similarly for A3.

We begin by stating the main result on which PS is based. To do this we will need the {\it Minkowski embedding} $\Phi:K\hookrightarrow\R^n$ defined by 
\begin{equation}\label{minkowski_embedding} \Phi(x) = \left(\sigma_1(x),\ldots,\sigma_{r_1}(x),\sqrt 2\Re\tau_1(x),\sqrt 2\Im\tau_1(x),\ldots,\sqrt 2\Re\tau_{r_2}(x),\sqrt 2\Im\tau_{r_2}(x)\right).\end{equation}

Recall that $\Phi(\o_K)\subset\R^n$ is a lattice of rank $n$ and determinant $|\Delta_K|^{1/2}$. In PS one requires a reduced basis for this lattice, which can be obtained by applying the LLL algorithm to any given basis. 

\begin{thm}[Peth\H{o}, Schmitt]\label{psthm}
Let $K$ be a number field of degree $n$ over $\Q$, and let $B\ge 1$ be a real number. Let $\{\omega_1,\ldots, \omega_n\}$ be an LLL-reduced integral basis for $\o_K$. Then every element $x\in K$ with $H_K(x)\leq B$ can be written in the form \[x=\frac{a_1\omega_1+\cdots+a_n\omega_n}{c},\] where $a_1,\ldots, a_n$ and $c$ are integers satisfying \[1\leq c\leq B\hspace{0.5cm}\mathrm{and}\hspace{0.5cm} |a_i|\leq 2^{n(n-1)/4}Bc\;.\]
\end{thm}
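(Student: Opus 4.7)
The plan is to construct a denominator $c$ from the ideal-theoretic data attached to $x$, and then to bound the integer coordinates of $cx$ in the LLL-reduced basis $\{\omega_i\}$ via the Minkowski embedding.

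First, I would produce $c$ as follows. Write the fractional ideal $(x)=\mathfrak{a}\mathfrak{b}^{-1}$ with coprime integral ideals $\mathfrak{a},\mathfrak{b}$; coprimality then implies $\mathfrak{b}=\{r\in\o_K:rx\in\o_K\}$. Let $c$ be the positive generator of the nonzero ideal $\mathfrak{b}\cap\Z$ of $\Z$. The additive embedding $\Z/c\Z\hookrightarrow\o_K/\mathfrak{b}$ forces $c\mid N(\mathfrak{b})$, while the product formula $H_K(x)=N(\mathfrak{b})\prod_{v\in M_K^{\infty}}\max\{|x|_v^{n_v},1\}$ yields $N(\mathfrak{b})\le H_K(x)\le B$. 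Hence $1\le c\le B$ and $cx\in\o_K$, so one may write $cx=a_1\omega_1+\cdots+a_n\omega_n$ for unique integers $a_i$.

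Next, I would bound the Euclidean length of the Minkowski vector $\Phi(cx)$. By submultiplicativity, $H_K(cx)\le H_K(c)H_K(x)\le cB$, so $|cx|_v^{n_v}\le cB$ at every archimedean place $v$. Each real place contributes a single coordinate with square $|cx|_v^2\le(cB)^2$, and each complex place contributes two coordinates with squared sum $2|cx|_v^2\le 2cB\le(cB)^2$ (using $cB\ge 1$). Summing over the $n$ coordinates gives $|\Phi(cx)|^2\le n(cB)^2$, so $|\Phi(cx)|\le\sqrt{n}\,cB$.

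Finally, I would use LLL-reducedness of $\Phi(\omega_1),\ldots,\Phi(\omega_n)$ to pass from this length estimate to bounds on the $a_i$. Let $b_i^*$ denote the Gram--Schmidt orthogonalization, with $|\mu_{ij}|\le 1/2$ and $|b_{i+1}^*|^2\ge\tfrac{1}{2}|b_i^*|^2$. Expanding $\Phi(cx)=\sum\xi_i b_i^*$, orthogonality gives $|\xi_i|\le|\Phi(cx)|/|b_i^*|$, and the triangular relation $\xi_i=a_i+\sum_{j>i}\mu_{ji}a_j$ lets me recover the $a_i$ by back-substitution. Using $|b_i^*|\ge 2^{(1-i)/2}|\Phi(\omega_1)|$ together with the bound $|\Phi(\omega_1)|\ge\sqrt{n}$ (from $|N_{K/\Q}(\omega_1)|\ge 1$ and AM--GM applied to the $n$ squared coordinates of $\Phi(\omega_1)$) controls $|\xi_i|$, and tallying the accumulated factors of $1/2$ and $2^{1/2}$ through the back-substitution yields $|a_i|\le 2^{n(n-1)/4}Bc$. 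The main obstacle is exactly this last step: the individual LLL estimates are standard, but extracting the precise exponent $n(n-1)/4$ requires careful bookkeeping, whereas the construction of $c$ and the Minkowski norm estimate are essentially routine.
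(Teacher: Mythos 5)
The construction of $c$ and the Minkowski norm estimate are essentially sound, and the asserted bound $\|\Phi(cx)\|\le\sqrt n\,cB$ is correct. (A minor misstep: $H_K(c)=c^n$ for a positive integer $c$, so $H_K(cx)\le c^nB$, not $cB$, and the claim $|cx|_v^{n_v}\le cB$ fails at complex places. The Minkowski bound survives anyway, since $|cx|_v\le cB^{1/n_v}$ at each place gives $\|\Phi(cx)\|^2\le r_1(cB)^2+2r_2c^2B\le n(cB)^2$.)

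The genuine gap is the third step, and it is not just unfinished bookkeeping: the Gram--Schmidt back-substitution with the estimates you list does \emph{not} yield $2^{n(n-1)/4}$. Writing $w=\|\Phi(cx)\|/\|\Phi(\omega_1)\|\le cB$ and using $\|b_i^*\|\ge 2^{(1-i)/2}\|b_1\|$ together with $|\mu_{ji}|\le\tfrac12$, the accumulated bound on $|a_1|$ is already $(1+\tfrac1{\sqrt2})w\approx 1.71\,cB$ for $n=2$, exceeding the claimed $2^{1/2}cB\approx 1.41\,cB$; for $n=3$ the tallying gives $|a_1|\le\tfrac{5+\sqrt2}{2}w\approx 3.21\,cB$ against the target $2^{3/2}cB\approx 2.83\,cB$. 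The losses compound and the shortfall worsens as $n$ grows, so no amount of ``careful bookkeeping'' along these lines will close the gap.

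The argument that does work --- and which the paper simply invokes by citing \cite[Thm.~1]{petho/schmitt} --- bypasses the triangular system. Expanding $\Phi(cx)=\sum_i a_i\Phi(\omega_i)$ and solving by Cramér's rule, Hadamard's inequality applied to the numerator determinant gives
\[ |a_i|\le\frac{\|\Phi(cx)\|\prod_{j\ne i}\|\Phi(\omega_j)\|}{|\Delta_K|^{1/2}}\le\frac{\sqrt n\,cB\prod_{j\ne i}\|\Phi(\omega_j)\|}{|\Delta_K|^{1/2}}. \]
LLL-reducedness is then used exactly once, through the global defect inequality $\prod_{j=1}^n\|\Phi(\omega_j)\|\le 2^{n(n-1)/4}|\Delta_K|^{1/2}$ of \cite{lll}; dividing by $\|\Phi(\omega_i)\|\ge\sqrt n$ gives $\sqrt n\prod_{j\ne i}\|\Phi(\omega_j)\|\le 2^{n(n-1)/4}|\Delta_K|^{1/2}$ and hence $|a_i|\le 2^{n(n-1)/4}cB$. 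The idea you are missing is that LLL should enter through this product (dual-basis/Hadamard) inequality on the Minkowski lattice, not through the per-step estimates $\|b_{i+1}^*\|^2\ge\tfrac12\|b_i^*\|^2$ fed into back-substitution.
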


\begin{proof} By \cite[Thm. 1]{petho/schmitt} it suffices to show that for every index $i$ we have 
\[\sqrt{n}\cdot\prod_{j\ne i}\|\Phi(\omega_j)\|\le 2^{n(n-1)/4}|\Delta_K|^{1/2}.\]
This inequality is a consequence of the fact that the given basis is reduced (see (1.8) in \cite{lll}) together with the fact --- noted in the proof of \cite[Thm. 2]{petho/schmitt} --- that $\|\Phi(\omega_j)\|\ge\sqrt n$ for every $j$.
\end{proof}

\begin{rem} The bound for $|a_i|$ given in Theorem \ref{psthm} is slightly better than the one proved in \cite[Thm. 2]{petho/schmitt}; the reason for this is that the Minkowski embedding used in \cite{petho/schmitt} does not include the factors of $\sqrt 2$ for the complex embeddings of $K$.
\end{rem}

Theorem \ref{psthm} leads to the following algorithm:

\begin{alg}[PS]\label{ps}\mbox{}\\
Input: A number field $K$ and a bound $B\geq 1$.\\
Output: A list of all elements $x\in K$ satisfying $H_K(x)\leq B$.
\begin{enumerate}[itemsep = 1.1mm]
\item Compute an LLL-reduced integral basis $\omega_1,\ldots, \omega_n$ for $\o_K$.
\item Create an empty list $L$.
\item For $c=1$ to $\lfloor B\rfloor$\;: 
\begin{enumerate}
\item Let $D=\lfloor 2^{n(n-1)/4 }Bc\rfloor$\;.
\item For every integer tuple $(a_1,\ldots, a_n)\in [-D,D]^n\;$:

\smallskip
\begin{enumerate}
\item Let $\displaystyle x=\frac{a_1\omega_1+\cdots+a_n\omega_n}{c}$\;.
\item If $H_K(x)\leq B$, then append $x$ to $L$.
\end{enumerate}
\end{enumerate}
\item Return the list $L$.
\end{enumerate}
\end{alg}

\medskip We now give our main result comparing the efficiency of PS with that of A3. 
 
\begin{thm}\label{eff} Let $K$ be a number field of degree $n$. The search ratios of {\rm PS} and {\rm A3} satisfy \[\sigma_{\mathrm{PS}}(B)\gg B^{2n-2}\;\;\;\;\mathrm{and}\;\;\;\;\sigma_{\mathrm{A3}}(B) \ll (\log B)^r,\] 

where $r$ is the rank of the unit group of $\o_K$.
\end{thm}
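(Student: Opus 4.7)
Both bounds compare the search-space size against Schanuel's asymptotic $\#\{x\in K : H_K(x)\le B\} \sim C_K B^2$.

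For the lower bound on $\sigma_{\mathrm{PS}}$, I would exhibit a large subcollection of $\mathcal S_{\mathrm{PS}}(B)$ by restricting to a single value of $c$. Taking $c=\lfloor B\rfloor$, the bound $D = \lfloor 2^{n(n-1)/4}Bc\rfloor$ is of order $B^2$, and the elements $(a_1\omega_1+\cdots+a_n\omega_n)/c$ with $(a_1,\ldots,a_n)\in[-D,D]^n$ are pairwise distinct, because $\omega_1,\ldots,\omega_n$ is a $\Q$-basis of $K$. This contributes at least $(2D+1)^n \gg B^{2n}$ elements to $\mathcal S_{\mathrm{PS}}(B)$, and dividing by $C_KB^2$ yields $\sigma_{\mathrm{PS}}(B)\gg B^{2n-2}$. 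This step is routine arithmetic.

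For the upper bound on $\sigma_{\mathrm{A3}}$, the strategy is to bound
\[ \#\mathcal S_{\mathrm{A3}}(B) \le \#\{u\in\o_K^{\times} : H_K(u)\le B\} + 2\#\mu_K\cdot\#\{B\text{-packets}\}. \]
The first summand is $\ll (\log B)^r$ by Corollary \ref{unit_number}. For the second, write $\#\{\text{packets}\} = \sum_{\l}\sum_{(i,j)\in R_{\l}} N_{\l,i,j}$, where $N_{\l,i,j}$ counts tuples $(n_1,\ldots,n_r)$ with $H_K(\epsilon_1^{n_1}\cdots\epsilon_r^{n_r})\le B\cdot H_K(g_{\l,i}/g_{\l,j})$. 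Two estimates are needed. First, the classical Dedekind--Landau asymptotic for the number of ideals of bounded norm within a fixed ideal class gives $s_{\l} \ll B$ (with implied constant depending only on $K$), so $\#R_{\l}\le s_{\l}^2 \ll B^2$. Second, choose each generator $g_{\l,i}$ inside a fundamental domain for the action of $\o_K^{\times}$ on $K^{\ast}/\mu_K$; a standard consequence of Dirichlet's unit theorem then forces $H_K(g_{\l,i})\ll N(g_{\l,i}\o_K)\ll B$, whence $H_K(g_{\l,i}/g_{\l,j})\ll B^2$. Corollary \ref{unit_number} now gives $N_{\l,i,j}\ll(\log B)^r$. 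Combining yields $\#\{\text{packets}\}\ll B^2(\log B)^r$, so $\#\mathcal S_{\mathrm{A3}}(B)\ll B^2(\log B)^r$, and dividing by $C_KB^2$ completes the argument.

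\textbf{Main obstacle.} The delicate technical point is the existence of ``short'' generators with $H_K(g_{\l,i})\ll B$. The idea is that for any principal ideal of norm at most $M$, one can multiply any chosen generator by a unit so that its image under $\Lambda$ lies in a bounded neighborhood of the ray $\{c\,\vec 1 : c\in\R\}$ in $\R^{r+1}$; this uses the fact that $\Lambda(\o_K^{\times})$ is a full lattice in the hyperplane $\sum x_i = 0$ together with compactness of a fundamental domain. After such a reduction, $|g_{\l,i}|_v^{n_v}$ is comparable to $N(g_{\l,i}\o_K)^{1/(r+1)}$, which forces $H_K(g_{\l,i}) \ll N(g_{\l,i}\o_K) \ll B$. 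Without this reduction, the quantities $H_K(g_{\l,i}/g_{\l,j})$ are not polynomially bounded in $B$ and Corollary \ref{unit_number} yields only a weaker estimate; all the other ingredients are essentially bookkeeping built on top of this.
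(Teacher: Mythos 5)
Your proposal is correct and follows essentially the same route as the paper's proof: both lower-bound $\#\mathcal S_{\mathrm{PS}}(B)$ by $\gg B^{2n}$ via the tuple count and divide by Schanuel's $C_KB^2$, and both upper-bound $\#\mathcal S_{\mathrm{A3}}(B)$ by combining an $O(B)$ bound on the number of principal ideals of bounded norm per class, a choice of generators $g_{\ell,i}$ reduced by units so that each $|g_{\ell,i}|_v^{n_v}\ll N(g_{\ell,i}\o_K)^{1/(r+1)}$, and Corollary \ref{unit_number}. The "main obstacle" you identify (short generators via a fundamental domain for the unit action) is exactly what the paper handles by citing Borevich--Shafarevich; the only cosmetic difference is that the paper exploits $(g_{\ell,i},g_{\ell,j})=\a_{\ell}$ to get $H_K(g_{\ell,i}/g_{\ell,j})\ll B$ rather than your $\ll B^2$, a distinction absorbed by the logarithm and irrelevant to the conclusion.
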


\begin{proof}
By Schanuel's formula \cite{schanuel} we know that there is a constant $C_K$ such that \[\#\{P\in K:H_K(P)\leq B\}\sim C_KB^2.\] A simple calculation shows that the size of the search space in PS satisfies $\#\mathcal S_{\mathrm{PS}}(B)>B^{2n}2^{n^2(n-1)/4+n}$; the first statement in the theorem then follows easily. Now let $\boldsymbol{\a}=\{\a_1,\ldots,\a_h\}$ be a complete set of ideal class representatives for $\o_K$ and $\boldsymbol\epsilon=\{\epsilon_1,\ldots,\epsilon_r\}$ a system of fundamental units. For each index $\l$ let $g_{\l,1},\ldots, g_{\l,s_{\l}}$ be generators for all nonzero principal ideals contained in $\a_{\l}$ whose norms are at most $B\cdot N(\a_{\l})$. By \cite[\S 2.5.4]{bor/sha} we may assume that \begin{equation}\label{g_bound}|g_{\l,i}|_v^{n_v}\leq E_K(B\cdot N(\a_{\l}))^{1/(r+1)}\end{equation} for every place $v\in M_{K}^{\infty}$ and all indices $\l,i$. Here $E_K$ is a constant which depends on $\boldsymbol\epsilon$ but not on $B$. 

Let $P(B)=\sum_{\l=1}^h s_{\l}$. Using the bound given in \cite[Thm. 1]{murty} we find that  $P(B)\ll B$. Using Theorem \ref{main_thm} we see that the size of the search space considered in A3 satisfies \begin{equation}\label{cand_bound}\#\mathcal S_{\mathrm{A3}}(B)\le 1+ \#\{u\in\o_K^{\times}:H_K(u)\leq D\}\cdot\left(1+2\cdot P(B)^2\right)\ll B^2\cdot\#\{u\in\o_K^{\times}:H_K(u)\leq D\},\end{equation} 

where $D$ is any number such that $D\geq B\cdot\max_{\l,i,j}H_K(g_{\l,i}/g_{\l,j})$.  By (\ref{g_bound}) we have \[H_K(g_{\l,i}/g_{\l,j})\leq \frac{\prod_{v\in M_K^{\infty}}\max\{|g_{\l,i}|_v^{n_v},|g_{\l,j}|_v^{n_v}\}}{N(\a_{\l})}\leq \frac{\prod_{v\in M_K^{\infty}}E_K(B\cdot N(\a_{\l}))^{1/(r+1)}}{N(\a_{\l})}=F_KB\] for all $\l,i,j$ and for some constant $F_K$ independent of $B$. Hence, we may take $D=F_KB^2$. By Corollary \ref{unit_number}, \[\#\{u\in\o_K^{\times}:H_K(u)\leq D\}\ll(\log B)^r.\] Therefore, by (\ref{cand_bound}), the size of the search space in A3 satisfies $\#\mathcal S_{\mathrm{A3}}(B)\ll B^2(\log B)^r.$ The second statement in the theorem follows from this inequality and Schanuel's asymptotic estimate.
\end{proof}

\section{Computational complexity and performance}\label{comp_perf} Theorem \ref{eff} shows that, for a fixed field $K$, the method A3 is asymptotically (as $B\to\infty$) much more efficient than PS. However, the search ratio is not the only factor determining total computation time: for instance, in A3 the initial step of computing the ideal class group and unit group of $\o_K$ can be very time-consuming if the discriminant of $K$ is large. We will briefly discuss here the time complexity of various steps in PS and A3. In order to illustrate the theoretical statements made here, we will also give examples based on explicit computations done with the two methods. Both algorithms have been implemented in Sage \cite{sage}, and all computations below have been done on a Mac Pro with a Quad-Core 2.26 GHz processor and 8 GB of memory. A precision must be chosen for floating point calculations, so we will fix a precision of 100 bits in all examples. 

The first step in PS is to determine an integral basis for $\o_K$. It is known by work of Chistov \cite[Thms. 2 and 3]{chistov} that this is polynomial-time equivalent to determining the squarefree part of the discriminant of a defining polynomial $P$ for $K$; in other words, the first problem can be solved in polynomial time if and only if the second one can. Unfortunately, finding the squarefree part of an integer is not much easier than factoring it, and  current methods for finding integral bases (see, for instance, \cite[\S6.1]{cohen}) do indeed begin by factoring disc$(P)$. Assuming the general number field sieve \cite{nf_sieve} is used for this factorization, the expected running time will be \[O\left(\exp((64d/9)^{1/3}(\log d)^{2/3})\right),\] where $d=1+\lfloor \log_2 \mathrm{disc}(P)\rfloor$ is the bit length of disc$(P)$. Once an integral basis $\boldsymbol\omega=\{\omega_1,\ldots, \omega_n\}$ is known, the LLL algorithm can be applied to the image of $\boldsymbol\omega$ under the Minkowski embedding \eqref{minkowski_embedding} to obtain a reduced basis for $\o_K$. This step will run in time \[O\left(n^6(\log \max_{1\le i\le n}\|\Phi(\omega_i)\|)^3\right).\] After computing a reduced basis for $\o_K$, the remaining time in PS is spent computing the heights of all elements in a search space, and it is here that PS has its main drawback. As mentioned in the proof of  Theorem \ref{eff}, the size of the search space is greater than $B^{2n}2^{n^2(n-1)/4+n}$ --- much larger than the size of the set PS is attempting to compute, which is roughly $B^2$. To illustrate the effect this large search space has on computation time, we give in Table \ref{PSrange} below the time required by PS to compute elements of bounded height in three number fields. We have chosen here the quadratic, cubic, and quartic fields of smallest discriminant (such data may be found in Jones's number field database \cite{jones}), so that the time required to compute an integral basis for $K$ is negligible. In the table, a field is specified by giving a defining polynomial for it. Note that, even with the very small bounds $B$ chosen here, the computing times with PS are far from optimal: for comparison, the same computations using A3 took 0.03 seconds for the quartic field, 0.02 seconds for the cubic field, and 0.03 seconds for the quadratic field. This great difference in computing times for A3 and PS is due to the extremely large search space used in PS.

\begin{table}[ht]
\centering
\begin{tabular}{|c|c|c|c|}
\hline
Number field $K$ & Height bound $B$ & PS time & $\sigma_{\mathrm{PS}}(B)$\\
\hline
$x^4 - x^3 + x^2 - x + 1$ & $2$ & 36.15 hours & $1.73\times 10^6$\\
\hline
$x^3 - x^2 -2x+ 1$ & 4 &  3.15 hours & 77,175\\
\hline
$x^2 - x + 1$ & $20$ &  $5.85$ hours & 12,630\\
\hline
\end{tabular}

\bigskip
\caption{Sample computations with PS}
\label{PSrange}
\end{table}

Given these times, we do not consider PS to be a practical method to use when $K$ has degree larger than 2, and even for quadratic fields the height bound $B$ must be very small in order for PS to terminate in a reasonable time.  

We discuss now the complexity of A3. In this algorithm, the initial step can have a substantial cost: in addition to a reduced integral basis as required by PS, in A3 we need to compute ideal class representatives and fundamental units for $\o_K$; by \cite[Thm. 5.5]{lenstra}, this step can be done in time
\[(2+\log|\Delta_K|)^{O(n)}|\Delta_K|^{3/4}.\]
This appears to be the best known upper bound on the running time for a deterministic computation of class groups and unit groups. However, assuming the generalized Riemann hypothesis (GRH), there are subexponential probabilistic algorithms: for instance, Buchmann \cite{buch} gives an algorithm which has expected running time 
\[\exp\left((1.7+ o(1))\sqrt{\log|\Delta_K|\log\log|\Delta_K|}\right).\] 
If $K$ is a field for which the cost of this initial step is high, then PS may perform better than A3, especially if the height bound $B$ is small. As an example of this phenomenon, consider the field $K=\Q(\sqrt{2928239983})$, which has class number $1,472$. Using PS to find all elements of height at most 4, the time is $16.19$ seconds, while using A3 and assuming GRH the time is 34.34 seconds. However, for the height bound $B=20$, the time with PS increases to 9 hours and 40 minutes, while the time for A3 (still assuming GRH) only increases to 36.37 seconds.

In step (2) of A3 one has to determine elements of given norm in $\o_K$. A good method for doing this is provided in \cite{fincke/pohst}; however, the authors do not give a precise statement of the complexity of their method. In any case, the cost of using this algorithm is tied to the size of the fundamental units computed in step (1). If the units are very large, then this step can be costly and even dominate the computation time of the entire algorithm. Consider, for instance the field $K=\Q(\sqrt{123456789123})$. A fundamental unit for $K$ has the form $a+b\sqrt{123456789123}$, where $a$ and $b$ are both greater than $10^{2096}$. We apply A3 to this field with height bound $B=3,000$, and we assume GRH in order to quickly obtain the class group and unit group of $\o_K$. The total computation time is 3.7 minutes, with $88\%$ of the time spent on step (2).

Steps (4), (9), and (10) of A3 are mostly spent on height computations. The cost of a single height computation depends on the precision used for floating point arithmetic, and the number of required height computations is determined by the size of the search space of A3. Since we have already studied the size of this space in the previous section, we will not discuss here the cost of these three steps. Note, however, that it will certainly be much smaller than the time spent on height computations in PS, since the search space is substantially smaller.

In step (7) of A3 one must find all integer lattice points that lie inside a polytope in $\R^r$ given by a collection of $2^r$ rational vertices. The polytope is obtained by applying the linear map $S^{-1}$ to a box of the form $[-d,d]^r$.  Here, $S$ is the $r\times r$ matrix whose columns are the vectors $\tilde\Lambda(\epsilon_j)$.  The integer points inside this polytope can be determined once a generating function for the polytope has been computed; by \cite[Thm. 4.4]{barvinok/pommersheim}, this can be done in time $\mathcal{L}^{O(r)}$, where $\mathcal{L}$ is the sum of the bit lengths of the coordinates of all vertices of the polytope. If the embeddings $K\hookrightarrow\C$ are computed with precision $p$, then the vertices of the polytope consist of floating point numbers of precision $p$, and thus we have $\mathcal L=2^r\cdot rp$. Hence, the generating function can be computed in time 
\[(2^r\cdot rp)^{O(r)}.\]

As suggested by the above complexity, an increase in the rank $r$ can lead to a significant increase in the time required for this step. For example, consider the following two sextic fields: 
\[K_1: x^6+2 \mathrm{\;\;\;and\;\;\;} K_2: x^6 - 6x^4 + 9x^2 - 3.\]
The fields $K_1$ and $K_2$ have the same degree, discriminants of similar size (namely, $-1492992$ and 1259712), equal class numbers (namely 1), and both have small fundamental units; however, the unit group rank is 2 for  $K_1$ and 5 for $K_2$. This means that in step (7) of the algorithm, in the case of $K_1$ we must compute integer points inside a polytope in $\R^2$, while in the case of $K_2$ we must work in $\R^5$. Applying A3 over both fields with height bound $B=500$, we find that for $K_1$ the time spent on step (7) is 0.005 seconds, while for $K_2$ it is 21.6 minutes.

In steps (11) - (13) of A3, the previously computed data of integer lattice points and packets is used to construct the final output list of numbers. This process consists entirely of arithmetic operations with elements of $K$, and its computational cost is largely determined by the size of the fundamental units computed in step (1). These units can be very large: for instance, one can reasonably expect that there are infinitely many real quadratic fields $K$ for which a fundamental unit cannot be written down with fewer than $|\Delta_K|^{1/2}$ bits (see \cite[\S 5]{lenstra}). If the chosen fundamental units are large, then the cost of these final steps can be considerable; however, we remark that in practice the effect of large fundamental units on computation time is much greater in step (2) than in these final steps. For example, in the computation mentioned above for the field $K=\Q(\sqrt{123456789123})$, only $4\%$ of the time was spent on steps (11) - (13).     

\subsection{Sample computations with A3}\label{performance} We end by giving a series of examples showing that A3 and Algorithm 5 (abbreviated A5) can be applied with number fields of various degrees and with several different height bounds which would be far beyond the practical range of applicability of PS. Note that, by Theorem \ref{iq_thm}, the search ratio of A5 is always 1.

\bigskip
\begin{table}[!htb]
\centering
\begin{tabular}{|c|c|c|c|}
\hline
$B$ & A5 time &  $\sigma_{\mathrm{A5}}(B)$ & Elements found\\
\hline
200 & 0.85 seconds & 1 & 15,275\\
\hline
1,000 &  18.46 seconds & 1 &  393,775\\
\hline
5,000 &   7.61 minutes & 1 & 9,761,079 \\
\hline
\end{tabular}

\medskip
\caption{Computations with A5 over the field $K=\Q(\sqrt{-107})$}
\label{A5_example}
\end{table} 

\begin{table}[!ht]
\centering
\begin{tabular}{|c|c|c|c|}
\hline
$B$ & A3 time &  $\sigma_{\mathrm{A3}}(B)$ & Elements found\\
\hline
$200$ &  5.69 seconds & 14.49 & 2,143\\
\hline
$1,000$ &  34.71 seconds & 17.07 & 54,679\\
\hline
$5,000$ & 7.40 minutes & 20.05 & 1,365,315\\
\hline

\end{tabular}

\bigskip
\caption{Computations with A3 over the field $K=\Q(\sqrt{36865})$}
\label{both_rq}
\end{table}

\begin{table}[!htb]
\centering
\begin{tabular}{|c|c|c|c|}
\hline
$B$ & A3 time & $\sigma_{\mathrm{A3}}(B)$ & Elements found\\
\hline
100 & 4.76 seconds & 88.66 &  5,123\\
\hline
500 & 1.68 minutes & 149.78 &  124,911\\
\hline
1,000 &  7.42 minutes & 175.62 & 489,255 \\
\hline
\end{tabular}

\medskip
\caption{Computations with A3 over the field $K: x^6+2$}
\label{A3_sextic_example}
\end{table}

\begin{table}[!htb]
\centering
\begin{tabular}{|c|c|c|c|}
\hline
$B$ & A3 time & $\sigma_{\mathrm{A3}}(B)$ & Elements found\\
\hline
100 & 3.27 minutes & 28,807 &  2,679\\
\hline
500 & 1.33 hours & 23,635 &  81,251\\
\hline
1,000 & 11.14 hours & 52,553 & 316,915 \\
\hline
\end{tabular}

\medskip
\caption{Computations with A3 over the cyclotomic field $\Q(\zeta_{13})$}
\label{A3_12_example}
\end{table}

\printbibliography[title=References]

\bigskip
\end{document}